\let\Sect\S 
\def\limp{\Rightarrow}      
\def\liff{\Leftrightarrow}  
\def\Lam#1#2{\lambda#1\,{.}\,#2}   
\def\cc{\texttt{c}\hspace{-2pt}\texttt{c}} 
\def\Fork{{\pitchfork}}            
\def\<{\langle}                    
\def\>{\rangle}                    
\def\redd{\twoheadrightarrow}      
\def\A{\mathscr{A}}      
\def\S{\mathscr{S}}      
\def\ile{\preccurlyeq}   
\def\ige{\succcurlyeq}   
\def\imeet{\curlywedge}  
\def\bigimeet{\bigcurlywedge} 
\def\iand{\sqcap}        
\def\ior{\sqcup}         
\def\ilambda{\boldsymbol{\lambda}}  
\def\ent{\vdash}         
\newcommand\ScaleForall[1]{\vcenter{\hbox{\scalefont{#1}$\forall$}}}
\newcommand\ScaleExists[1]{\vcenter{\hbox{\scalefont{#1}$\exists$}}}
\DeclareMathOperator*\bigforall{%
  \vphantom\sum\mathchoice{\ScaleForall{2}}{\ScaleForall{1.4}}{\ScaleForall{1}}{\ScaleForall{0.75}}}
\DeclareMathOperator*\bigexists{%
  \vphantom\sum\mathchoice{\ScaleExists{2}}{\ScaleExists{1.4}}{\ScaleExists{1}}{\ScaleExists{0.75}}}
\def\Hom{\mathrm{Hom}}         
\def\id{\mathrm{id}}           
\def\op{\mathrm{op}}           
\def\Set{\mathbf{Set}}         
\def\HA{\mathbf{HA}}           
\def\Asm{\mathbf{Asm}}
\def\AsmA{\Asm_\A}            
\def\Car#1{\underline{#1}}     
\def\Ex#1{\mathbf{E}_{#1}}  
\def\Im{\mathrm{Im}}           
\def\Ker{\mathrm{Eq}}         
\def\Coker{\mathrm{Coeq}}     
\def\NN{\mathbf{N}}            
\def\P{\mathsf{P}}             
\def\Anon{\text{--}}           
\def\N{\mathds{N}}             
\def\Pow{\mathfrak{P}}         
\def\ds{\displaystyle}
\def\mcent#1{\mskip-100mu#1\mskip-100mu}
\long\def\COUIC#1{}
\def\Im{\mathrm{Im}}           
\def\SOC{\mathbf{\Omega}}      
\newtheorem{theorem}{Theorem}[section]
\newtheorem{proposition}[theorem]{Proposition}
\newtheorem{fact}[theorem]{Fact}
\newtheorem{lemma}[theorem]{Lemma}
\theoremstyle{definition}
\newtheorem{definition}[theorem]{Definition}
\newtheorem{remark}[theorem]{Remark}
\newtheorem{examples}[theorem]{Examples}
\newcounter{nnnotation}
\newtheorem{notation*}[nnnotation]{Notation}
\newcommand{\tmmathbf}[1]{\ensuremath{\boldsymbol{#1}}}
\newcommand{\tmop}[1]{\ensuremath{\operatorname{#1}}}
\newcommand{\tmrsup}[2]{\ensuremath{#1^#2}}
\newcommand{\tmrsub}[2]{\ensuremath{#1_#2}}
\newcommand{\assign}{:=}
\newcommand{\nin}{\not\in}
\newcommand{\nocomma}{}
\newcommand{\tmem}[1]{{\em #1\/}}
\newcommand{\tmtextbf}[1]{\text{{\bfseries{#1}}}}
\newenvironment{enumeratenumeric}{\begin{enumerate}[1.] }{\end{enumerate}}
\newenvironment{enumerateroman}{\begin{enumerate}[i.] }{\end{enumerate}}
\newcommand{\ens}{\ensuremath{\Set}} 
\newcommand{\arcat}[1]{\tmrsup{#1}{\rightarrow}}
\newcommand{\slice}[2]{#1/#2}
\def\arobjxy#1#2#3{
\def\objectstyle{\scriptstyle}
\def\labelstyle{\scriptscriptstyle}
\left [ \vcenter{\xymatrix @-1.3pc {#1 \ar[d]^{#2} \\ #3}} \right ]
}
\newcommand{\arobj}[3]{\arobjxy #2 #1 #3}
\def\smallarobjxy#1#2#3{
\def\objectstyle{\scriptstyle}
\def\labelstyle{\scriptscriptstyle}
 [ \hspace{-.25pc}
\vcenter  {\vspace{.085pc} \xymatrix @-1.3pc {#2: \hspace{.08pc} #1 \ar[r] & #3}}
\hspace{-.25pc} ]
}
\newcommand{\smallarobj}[3]{\smallarobjxy #2 #1 #3}
\newcommand{\pullbackcorner}[1][dr]{\save*!/#1-1.4pc/#1:(-1,1)@^{|-}\restore}
\newcommand{\depr}[1]{\tmrsub{\bigsqcap}{#1}}
\newcommand{\psect}{\ensuremath{\mathfrak{S}}}
\newcommand{\ppsect}[2]{{ #1} {\psect} { #2}}
\newcommand{\apsect}[2]{{ #1} {\psect} { #2}}
\newcommand{\lam}[2]{\Lam{#1}{#2}} 
\newcommand{\intrpa}[1]{(#1)^{\A}}
\newcommand{\exi}[1]{\Ex{#1}} 
\newcommand{\pfib}[3]{#1 \tmrsub{\times}{#2} #3}
\newcommand{\car}[1]{\Car{#1}} 
\newcommand{\lpair}[2]{\ensuremath{\tmmathbf{\left\< #1, #2 \right\>}}}
\newcommand{\lpione}{\ensuremath{\tmmathbf{\pi_1}}}
\newcommand{\lpitwo}{\ensuremath{\tmmathbf{\pi_2}}}
\def\IRIF{Institut de Recherche en Informatique Fondamentale (IRIF),
  Université Paris Cité, Bâtiment Sophie Germain, Case courrier 7014,
  8 place Aurélie Nemours, 75205 Paris cedex 13, Paris, France}
\def\IMERL{Instituto de Matem{\'a}tica y Estad{\'i}stica Rafael Laguardia
  (IMERL), Facultad de Ingenier{\'i}a, Universidad de la Rep{\'u}blica,
  Julio Herrera y Reissig 565, C.P. 11300 Montevideo, Uruguay}
\def\LAMA{Laboratoire de Math{\'e}matiques (LAMA),
  Universit{\'e} Savoie Mont Blanc,
  B{\^a}timent Le Chablais, Campus scientifique,
  73376 Le Bourget du Lac, France}
\begin{document}

\title{Implicative Assemblies}
\author[F. Castro]{Félix Castro}
\address{\IRIF{} \& \IMERL}
\author[A. Miquel]{Alexandre Miquel}
\address{\IMERL}
\author[K. Worytkiewicz]{Krzysztof Worytkiewicz}
\address{\LAMA}
\date{}
\thanks{Competing interests: The authors declare none.}
\maketitle


\begin{abstract}
    \emph{Implicative algebras}, recently discovered by Miquel, are combinatorial structures unifying classical and intuitionistic realizability as well as forcing.  In this paper we introduce \emph{implicative assemblies} as sets valued in the separator of an underlying implicative algebra. Given a fixed implicative algebra $\A$, implicative assemblies over $\A$ organise themselves in a category $\AsmA$ with \emph{tracked} set-theoretical functions as morphisms. We show that $\AsmA$ is a quasitopos with NNO.
\end{abstract}

\nocite{Kle45,Coh63,Coh64,HJP80,Hyl82,Str13}
\section{Introduction}\label{s:Intro}

As for other parts of the logical apparatus, it is quite practical to handle structural aspects of realizalibility and those of forcing in terms of toposes or categories with similar features. It is fair to say that sheaf models of intuitionistic logic \cite{fourman2006sheaves} (used as environment for constructive analysis \cite{fourman2006sheaf}) were a precursor of sorts here, as further study of the phenomenon lead to the discovery of the \emph{effective topos} \cite{Hyl82}. It was the first of an array of toposes encompassing diverse notions of intuitionistic realizability \cite{Oos08}.
In a different direction, sheaf models were also the starting point for topos-theoretical versions of forcing. While topos-theoretical independence proofs for the continuum hypothesis \cite{lawvere1970quantifiers,tiernay1972sheaf} or for Suslin's hypothesis \cite{bunge1974topos} were post factum, other independence proofs were  carried out directly by means of topos-theoretic techniques \cite{fourman1982world,freyd1980axiom}. On the other hand, classical realizability's origin is quite distinct. The observation that extending the $\lambda$-calculus with control operators yields proof terms for classical logic \cite{griffin1989formulae} lead to the concept of Krivine's machine, the computational device underpinning classical realizability \cite{Kri09} and it's topos-theoretical counterpart \cite{Str13}.

It has been observed every now and again that realizability and forcing exhibit
common traits, yet a unified foundation for these anything but similar topics has
remained elusive for decades. Still, recent years have witnessed a significant change
in this state of things with the discovery of implicative algebras  \cite{Miq20},
which turned out to be the basic combinatorial structures providing the
missing link. An implicative algebra is a complete lattice $(\A,\ile)$ equipped with a
(non-necessarily Heyting) implication operator $\to$ and a subset $\S$ called
separator.  The meaning of its elements is twofold: they can be
thought of as (generalised) realizers on one hand and as truth values
on the other. The fragment $(\A, \ile, \to)$ of an implicative algebra, called
implicative structure, is a model and a Curry-style type system for a
$\lambda$-calculus with constants in $\A$, possibly extended by the control
operator \texttt{call/cc}.

The order $\ile$ can be
thought of as a (sub)typing relation, so a deduction establishing a type is
tantamount to the construction of a realizer.
However,
with this infrastructure alone {\tmem{any}} element of $\A$ can be
thought of as a realizer, which is not really an option given the Curry-Howard
correspondence. It is here where the separator $\S$ comes into play:
it is the device which carves out the {\tmem{correct}} realizers.
Implicative algebras are in their essence abstract versions of
classical realizability algebras \cite{Kri11}, yet their additional layers of abstraction also allow to accommodate intuitionistic
realizability. What is more, forcing fits into the framework as well.

Since implicative algebras unify the notions of (intuitionistic and classical) realizability and forcing, to ask how they translate into categorical logic is a rather natural question. Similarly to the intuitionistic case \cite{Hyl82}, an implicative algebra
$(\A, \ile, \to, \S)$ gives rise to
{\tmem{non-standard predicates}}, that is functions $\phi : I \to
\A$. The set $\A^I$ of non-standard predicates on $I$
equipped with the pointwise implication gives rise to a preorder
$\ent_{\S[I]}$ given by
\begin{eqnarray*}
  \phi \ent_{\S[I]} \psi & \liff & \left( \bigimeet_{i \in
  I} \phi (i) \to \psi (i) \right) \in \S
\end{eqnarray*}
This preorder is Heyting. The non-standard predicates neatly organise
themselves in a tripos \cite{pitts2002tripos} when varying the index set $I$. It is a truly
outstanding fact that {\tmem{any}} (set-based) tripos is equivalent to one
built from an implicative algebra \cite{Miq20}. Applying the functor $\Set
[-]$ (colloquially known as {\tmem{tripos-to-topos construction}} \cite{Pit81})
to such an {\tmem{implicative tripos}} gives rise to an {\tmem{implicative
topos}}. Analysing the construction unveils the objects of an implicative
topos as sets
equipped with an $\A$-valued
{\tmem{non-standard equality}},
symmetric and transitive but not necessarily reflexive in the internal language
of the underlying implicative tripos (the lack of reflexivity is perhaps the main point of the construction). We call such sets \emph{implicative}. Morphisms among implicative sets are functional relations in the
internal language of the underlying tripos.

The
non-standard equality
$| - \!\! \approx \!\! - | : X \times X \to
\A$ on an implicative set $X$ is reflexive when
$| x \!\approx \! x | \in \S$ for
all $x \in X$. As the latter condition generally fails, $\exi{X} (x) \assign | x \! \approx \! x |$ acts as an
{\tmem{existence predicate}}. We call {\tmem{ghosts}} elements $x \in X$ such
that $\exi{X}(x) \nin \S$. Within an implicative topos sits the
subcategory of {\tmem{implicative assemblies}}, which are implicative sets
with no ghost elements. For this reason, morphisms of implicative assemblies
amount to plain set-theoretical functions verifying a {\tmem{tracking condition}}. It is
possible to construct the category of implicative assemblies from scratch
independently of the implicative topos. The construction itself is all but
obvious: objects are pairs $(X, \exi{X})$ with $X$ a set and $\exi{X} : X \to
\S$ an existence predicate while morphisms $f : (X, \exi{X}) \to
(Y, \exi{Y})$ are set-theoretical functions $f : X \to Y$ verifying the tracking
condition
\[ \bigimeet_{x \in X} (\exi{X} (x) \to \exi{Y} (f (x))) \in \S
\]
All the same, it is  not immediate from the onset that the category of implicative assemblies is a quasitopos. In this paper we address the question and show that it is a quasitopos with NNO, leaving the intriguing questions
related to the nature of associated completions \cite{robinson1990colimit,menni2000exact} for the follow-up one.

Section 2 contains a thorough introduction to implicative algebras, section 3
defines the category of implicative assemblies, section 4 details its
structure of quasitopos with NNO while section 5 addresses the particular case of
implicative quasitoposes built from an implicative algebra of forcing type.

\section{Implicative algebras: a primer}
\label{s:ImpAlg}

\subsection{Implicative structures}
The notion of implicative algebra is based on the notion of
implicative structure, that it is now time to present.
Formally, an \emph{implicative structure} is a triple
$(\A,{\ile},{\to})$ where $(\A,{\ile})$ is a complete lattice (whose
ordering $\ile$ represents \emph{subtyping}), and where ${\to}$ is a
binary operation on~$\A$ (the \emph{implication} of~$\A$) such that
\begin{enumerate}
\item[(1)] If $a'\ile a$ and $b\ile b'$,
  then $(a\to b)\ile(a'\to b')$\hfill($a,a',b,b'\in\A$)
\item[(2)] $\ds a\to\bigimeet_{\mcent{b\in B}}b
  ~=~\bigimeet_{\mcent{b\in B}}(a\to b)$\hfill($a\in A$, $B\subseteq\A$)
\end{enumerate}

\begin{examples}\label{ex:ImpStruct}
  There are many examples of implicative structures, mostly coming
  from the theory of forcing and from intuitionistic and classical 
  realizability.
  For instance:
  \begin{enumerate}
  \item[(1)] Each complete Heyting (or Boolean) algebra $(\A,{\ile})$
    is an implicative structure, whose implication $\to$ is derived
    from the ordering $\ile$ using Heyting's adjunction:
    $$c\imeet a\ile b\qquad\text{iff}\qquad c\ile a\to b
    \eqno(a,b,c\in H)$$
  \item[(2)] If $(P,{\cdot})$ is a (total) applicative structure, then
    the triple $(\A,{\ile},{\to})$ defined by
    $$\begin{array}{r@{~{}~}c@{~{}~}l}
      \A&:=&\Pow(P) \\
      a\ile b&:\equiv&a\subseteq b \\
      a\to b&:=&\{x\in P~:~\forall y\in q,~x\cdot y\in b\}\\
    \end{array}\eqno\begin{array}{r@{}}
    \\(a,b\in\A)\\(\text{Kleene's implication})\\
    \end{array}$$
    is an implicative structure.
  \item[(3)] Each \emph{classical realizability algebra}~\cite{Kri11}
    (also called \emph{abstract Krivine structure} in~\cite{Str13})
    induces an implicative structure as well, and even a classical
    implicative algebra~\cite{Miq20}.
  \end{enumerate}
\end{examples}

\subsubsection*{Elements of~$\A$ as truth values}
The elements of an implicative structure~$\A$ are primarily intended
to represent ``truth values'' according to some semantic of
(intuitionistic or classical) logic.
Although implicative structures focus on implication (modeled
by the operator $\to$) and universal quantification (modeled by
arbitrary meets), they also provide all the standard connectives and
quantifiers of logic, using the standard second-order encodings:
\begin{center}\medbreak
  $\begin{array}{@{}l@{\qquad}rcl}
    (\text{Absurdity}) & \bot&:=&\min\A \\[6pt]
    (\text{Triviality}) & \top&:=&\max\A \\[6pt]
    (\text{Negation}) & \lnot a&:=&a\to\bot \\[6pt]
    (\text{Conjunction}) & a\iand b &:=& \ds\bigimeet_{\mcent{c\in\A}}
    \bigl((a\to b\to c)\to c\bigr) \\
    (\text{Disjunction}) & a\ior b&:=&\ds\bigimeet_{\mcent{c\in\A}}
    \bigl((a\to c)\to(b\to c)\to c\bigr) \\
    (\text{Universal quantification}) &
    \ds\bigforall_{i\in I}a_i&:=&\ds\bigimeet_{i\in I}a_i \\
    (\text{Existential quantification}) &
    \ds\bigexists_{i\in I}a_i&:=&\ds
    \bigimeet_{\mcent{c\in\A}}\biggl(
    \Bigl(\bigimeet_{\mcent{i\in I}}(a_i\to c)\Bigr)
    ~\to~c\biggr) \\
  \end{array}$\hfill~\medbreak
\end{center}
(for all $a,b\in\A$, $(a_i)_{i\in I}\in\A^I$).

\subsubsection*{Elements of $\A$ as generalized realizers}
However, the salient feature of implicative structures is that their
elements can also be used to models proofs---or better:
realizers---using an interpretation of the (pure) $\lambda$-calculus
that is defined as follows:
\begin{itemize}
\item[$\bullet$] Given elements $a,b\in\A$, we define their
  \emph{application} $ab\in\A$ by:
  $$ab~:=~\bigimeet\bigl\{c\in\A:a\ile(b\to c)\bigr\}\,.$$
\item[$\bullet$] Given an arbitrary function $f:\A\to\A$, we define
  its \emph{abstraction} $\ilambda{f}\in\A$ by:
  $$\ilambda{f}~:=~\bigimeet_{\mcent{a\in\A}}(a\to f(a))\,.$$
\end{itemize}

\begin{fact}[Properties of application and abstraction]
  For all elements $a,a',$ $b,b',c\in\A$ and for all functions
  $f,f':\A\to\A$: 
  \begin{enumerate}
  \item If $a\ile a'$ and $b\ile b'$, then $ab\ile a'b'$\hfill
    (Application is monotonic)
  \item If $f\ile f'$ (pointwise), then
    $\ilambda{f}\ile\ilambda{f'}$\hfill 
    (Abstraction is monotonic)
  \item$(\ilambda{f})a~\ile~f(a)$\hfill($\beta$-reduction)
  \item$a~\ile~\ilambda{f}(x\mapsto ax)$\hfill($\eta$-expansion)
  \item $ab\ile c$\quad iff\quad $a\ile(b\to c)$\hfill(Adjunction)
  \end{enumerate}
\end{fact}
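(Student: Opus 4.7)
My plan is to prove adjunction (item 5) first, since all the other items will cascade from it together with the two structural axioms. The easy direction is immediate: if $a \ile b \to c$, then $c$ belongs to the set $\{c' \in \A : a \ile (b \to c')\}$ whose infimum defines $ab$, so $ab \ile c$.

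The converse direction is the one genuine obstacle. The key trick is to first establish the ``reflexive'' instance $a \ile b \to ab$. Using axiom (2) to commute $\to$ with the defining meet of $ab$, we have
\[
b \to ab \;=\; b \to \bigimeet\{c' \in \A : a \ile (b \to c')\} \;=\; \bigimeet\{b \to c' : a \ile (b \to c')\}.
\]
Now $a$ is below each term of this meet by definition of the indexing condition, hence below the meet itself. Then if $ab \ile c$, monotonicity of $\to$ in its second argument (axiom (1)) yields $b \to ab \ile b \to c$, so by transitivity $a \ile b \to c$.

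Once adjunction is established, the other items are short. For monotonicity of application (item 1), I would chain $a \ile a' \ile (b' \to a'b') \ile (b \to a'b')$, the middle step being adjunction applied to $a'b' \ile a'b'$ and the last step being monotonicity of $\to$; one more application of adjunction gives $ab \ile a'b'$. Monotonicity of abstraction (item 2) is immediate from axiom (1): pointwise $f \ile f'$ gives $a \to f(a) \ile a \to f'(a)$ for each $a$, and meets preserve this order. For $\beta$-reduction (item 3), $\ilambda{f}$ is by definition a meet whose family contains $a \to f(a)$, so $\ilambda{f} \ile a \to f(a)$, and adjunction converts this to $(\ilambda{f})a \ile f(a)$. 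Finally, $\eta$-expansion (item 4) reduces via the definition of $\ilambda$ to showing $a \ile x \to ax$ for every $x \in \A$, which by adjunction is simply $ax \ile ax$.

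The heart of the argument is therefore the interplay between axiom (2) and the meet-definition of application used to prove the $(\Rightarrow)$ direction of adjunction; I expect the rest to be essentially bookkeeping.
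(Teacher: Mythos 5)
Your proof is correct, and it follows the canonical derivation: the paper states this as a Fact without proof (deferring to \cite{Miq20}), but your route --- establishing $a\ile b\to ab$ from axiom (2) applied to the meet defining $ab$, deducing the adjunction, and then reading off the remaining items --- is exactly the standard argument. The one step worth double-checking, the use of axiom (2) to rewrite $b\to\bigimeet\{c':a\ile(b\to c')\}$ as a meet of implications, is applied correctly, so nothing is missing.
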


Thanks to the above two operations, we can interpret each closed
$\lambda$-term~$t$ with parameters%
\footnote{Recall that a \emph{$\lambda$-term with parameters in~$\A$}
  is a term of the pure $\lambda$-calculus possibly enriched with
  (computationally inert) constants taken in the set~$\A$.}
in~$\A$ as the element $(t)^{\A}\in\A$ defined by:
$$\begin{array}{rcl}
  (a)^{\A}&:=&a\\
  (tu)^{\A}&:=&t^{\A}u^{\A} \\
  (\Lam{x}{t})^{\A}&:=&
  \ilambda\Bigl(a\mapsto(t[x:=a])^{\A}\Bigr) \\
\end{array}$$

\begin{fact}
  For all $\lambda$-terms $t,t'$ with parameters in~$\A$:
  \begin{enumerate}
  \item If~~$t\redd_{\beta}t'$,~~then~~$(t)^{\A}\ile(t')^{\A}$\hfill
    ($\beta$-reduction)
  \item If~~$t\redd_{\eta}t'$,~~then~~$(t)^{\A}\ige(t')^{\A}$\hfill
    ($\eta$-reduction)
  \end{enumerate}
\end{fact}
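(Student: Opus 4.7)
The plan is to handle both items with the same scheme. Since $\redd_\beta$ (resp.\ $\redd_\eta$) is the reflexive–transitive closure of one-step reduction and $\ile$ is transitive, it suffices to treat the one-step case. For one-step reduction I then proceed by induction on the context of the contracted redex, treating the redex at the root as the base case and propagating the inequality across applications and abstractions via the monotonicity clauses of the preceding fact.

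First I would state a routine \emph{substitution lemma}: for every $\lambda$-term $t$ with parameters in~$\A$, every variable $x$, and every term $u$ with parameters in~$\A$, one has $(t[x:=u])^{\A}=(t[x:=(u)^{\A}])^{\A}$. This is an immediate induction on~$t$: the definition of $(\Anon)^{\A}$ already commutes substitution into $\A$-parameters through the clause for $\lambda$-abstraction, and the other clauses are transparent. With the lemma in hand, the head case of $\beta$ is the contraction $(\Lam{x}{t})u\red_\beta t[x:=u]$, for which we compute
\[
((\Lam{x}{t})u)^{\A}
~=~\ilambda\bigl(a\mapsto(t[x:=a])^{\A}\bigr)\,(u)^{\A}
~\ile~(t[x:=(u)^{\A}])^{\A}
~=~(t[x:=u])^{\A},
\]
invoking item~(3) of the preceding fact ($\beta$-reduction for $\ilambda$) and the substitution lemma. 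The head case of $\eta$ is $\Lam{x}{tx}\red_\eta t$ with $x\nin\FV(t)$, where
\[
(\Lam{x}{tx})^{\A}
~=~\ilambda\bigl(a\mapsto (t)^{\A}a\bigr)
~\ige~(t)^{\A},
\]
by item~(4) ($\eta$-expansion).

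The inductive cases are the congruence rules. If $t_1\red t_1'$ and the induction hypothesis gives the appropriate $\ile$-inequality between $(t_1)^{\A}$ and $(t_1')^{\A}$, then monotonicity of application (item~(1)) transports it to $t_1u$ and to $ut_1$, and monotonicity of abstraction (item~(2)) transports it under a $\lambda$-binder, provided we first know that $(t_1[x:=a])^{\A}\ile(t_1'[x:=a])^{\A}$ pointwise in $a\in\A$. The latter follows from the observation that parameter substitution preserves one-step $\beta$- and $\eta$-reduction (substituting a computationally inert constant can neither create nor destroy a redex), together with the induction hypothesis applied to $t_1[x:=a]\red t_1'[x:=a]$. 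The main subtlety is precisely this abstraction case: one has to interleave the external induction on contexts with an internal appeal to stability of reduction under parameter substitution, in order to feed the \emph{pointwise} monotonicity of $\ilambda$. For $\eta$ the whole argument is symmetric with the inequalities reversed.
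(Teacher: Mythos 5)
Your proof is correct and follows exactly the route the paper intends: the paper states this as a Fact without proof (deferring to~\cite{Miq20}), but the immediately preceding fact on monotonicity, $\beta$-reduction and $\eta$-expansion of $\ilambda$ and application is precisely the toolkit you deploy, together with the routine substitution lemma $(t[x:=u])^{\A}=(t[x:=(u)^{\A}])^{\A}$. You also correctly flag the one genuinely delicate point, namely that the congruence case under a $\lambda$-binder needs stability of one-step reduction under substitution of inert parameters so that the induction hypothesis can be applied pointwise before invoking monotonicity of $\ilambda$.
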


Note that that the interpretation function $t\mapsto(t)^{\A}$ is not
necessarily injective.
For example, in the particular case where $\A$ is a complete Heyting
algebra (Example~\ref{ex:ImpStruct}~(1)), it is easy to check that
application collapses to the meet (namely: $ab=a\imeet b$ for all
$a,b\in\A$), and that all pure $\lambda$-terms~$t$ (without
parameters) are interpreted as the top truth value: $(t)^{\A}=\top$.
On the other end of the spectrum, when~$\A$ is induced by a
non-degenerated (total) applicative structure
(Example~\ref{ex:ImpStruct}~(2)) that is also a combinatory algebra,
the interpretation of pure $\lambda$-terms (without parameters) is
necessarily injective on $\beta\eta$-normal forms.

\subsubsection*{The ambivalence of the ordering $a\ile b$}
From what precedes, it is clear that both the terms and the types of a
polymorphic $\lambda$-calculus such as Curry-style system~F (possibly
enriched with finite intersection types) can be interpreted in any
implicative structure~$\A$.
In this setting, it is easy to check that if some pure 
$\lambda$-term~$t$ has some type~$U$ in the aforementioned type
system, then we have $(t)^{\A}\ile(U)^{\A}$, independently from the
implicative structure~$\A$.
(Technically, this result is proved by introducing a suitable notion
of \emph{semantic subtyping}, see~\cite{Miq20}, \Sect~2.5.)

The above observation shows that the partial ordering $\ile$ at the
core of implicative structures (and implicative algebras) can be given
different meanings depending on whether we consider its arguments as
types or realizers.
So that $a\ile b$ may actually read:
\begin{itemize}
\item \emph{$a$ is a subtype of~$b$},
  if we view both~$a$ and~$b$ as types;
\item\emph{$a$ has type~$b$},
  if we view~$a$ as a realizer and~$b$ as a type;
\item \emph{$a$ is more defined than~$b$},
  if we view both~$a$ and~$b$ as realizers.
\end{itemize}

In what follows, we shall frequently mention the (semantic)
combinators $\mathbf{K}^{\A}=(\Lam{xy}{x})^{\A}$ and
$\mathbf{S}^{\A}=(\Lam{xyz}{xz(yz)})^{\A}$, observing that both
combinators actually coincide with their respective principal types
in~$\A$, in the sense that:
\begin{fact}[Combinators $\mathbf{K}$ and $\mathbf{S}$]
  In any implicative structure~$\A$, we have
  $$\begin{array}[b]{rcl}
    \mathbf{K}^{\A}&=&
    \ds\bigimeet_{\mcent{a,b\in\A}}(a\to b\to a)\\
    \noalign{\smallskip}
    \mathbf{S}^{\A}&=&\ds\bigimeet_{\mcent{a,b,c\in\A}}
    \bigl((a\to b\to c)\to(a\to b)\to a\to c\bigr)\\
  \end{array}\leqno\text{and}$$
\end{fact}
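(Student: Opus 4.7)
My plan is to compute both $\mathbf{K}^{\A}$ and $\mathbf{S}^{\A}$ directly from the definitions of application, abstraction, and interpretation of $\lambda$-terms, and then to identify the results with the stated principal types using only axiom~(2), axiom~(1), and the adjunction $ab\ile c\liff a\ile(b\to c)$ from the preceding fact.

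For $\mathbf{K}^{\A}$, I would unfold $(\lambda xy.x)^{\A}$ one binder at a time to obtain $\bigimeet_{a}\bigl(a\to\bigimeet_{b}(b\to a)\bigr)$, and then apply axiom~(2) to pull the inner meet outside the implication, concluding $\mathbf{K}^{\A}=\bigimeet_{a,b\in\A}(a\to b\to a)$ with no further argument. This case really is a one-step calculation.

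For $\mathbf{S}^{\A}$, the analogous unfolding (now with three nested abstractions), followed by three uses of axiom~(2), reduces the claim to the lattice-theoretic identity
\[
\bigimeet_{a,b,c\in\A}\bigl(a\to b\to c\to ac(bc)\bigr)~=~
\bigimeet_{A,B,C\in\A}\bigl((A\to B\to C)\to(A\to B)\to A\to C\bigr)\,,
\]
which I would establish by two separate inequalities. For $\ile$, fix $A,B,C\in\A$ and instantiate the left-hand meet at $(a,b,c)=(A\to B\to C,\,A\to B,\,A)$; the adjunction yields $(A\to B\to C)(A)\ile B\to C$ and $(A\to B)(A)\ile B$, hence by monotonicity of application $(A\to B\to C)(A)((A\to B)(A))\ile C$, and axiom~(1) applied to the innermost arrow concludes. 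For $\ige$, fix $a,b,c\in\A$ and instantiate the right-hand meet at the non-obvious choice $(A,B,C)=(c,\,bc,\,ac(bc))$; two uses of the adjunction give $a\ile c\to bc\to ac(bc)$ and $b\ile c\to bc$, and three uses of axiom~(1) turn the resulting instance of the scheme into a lower bound for $a\to b\to c\to ac(bc)$.

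The only genuinely creative step is the instantiation $(A,B,C)=(c,bc,ac(bc))$ in the $\ige$ direction for $\mathbf{S}$, which is the main (and mild) obstacle; everything else is mechanical bookkeeping with the two axioms of an implicative structure and the adjunction. The instantiation is essentially forced by the need to absorb the compound application $ac(bc)$ appearing on the left-hand side, which suggests reading the principal type scheme in the opposite direction and solving for $A$, $B$, and $C$ so that the three adjoint conditions become tautologies.
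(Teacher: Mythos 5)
Your proof is correct. The paper states this Fact without proof (it is imported from~\cite{Miq20}), and your derivation is the standard one: unfold the interpretation of the $\lambda$-terms, collapse the nested meets with axiom~(2), and establish the two inequalities for $\mathbf{S}^{\A}$ via the adjunction $ab\ile c\liff a\ile(b\to c)$ together with the variance axiom~(1) --- with the instantiation $(A,B,C)=(c,\,bc,\,ac(bc))$ being exactly the right choice for the $\ige$ direction.
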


\subsubsection*{Interpreting the control operator `call$/$cc'}
The above interpretation of $\lambda$-terms in the implicative
structure~$\A$ extends to the operator~$\cc$ (`call$/$cc', for `call
with current continuation'), provided we identify the latter with
Peirce's law:
$$(\cc)^{\A}~:=~\bigimeet_{\mcent{a,b\in\A}}(((a\to b)\to a)\to a)
~=~\bigimeet_{a\in\A}((\lnot a\to a)\to a)\,.$$
Note however that in most implicative structures (typically those
coming from intuitionistic realizability), the above definition
collapses to~$\bot$.

\subsection{Separators and implicative algebras}
Implicative structures do not come with a fixed criterion of truth,
telling us which elements of~$\A$ are considered to be true.
Technically, such a criterion is provided by a \emph{separator}, that
is a subset $\S\subseteq\A$ such that: 
\begin{enumerate}
\item If $a\in\S$ and $a\ile b$, then $b\in\S$\hfill
  (upwards closed)
\item $\mathbf{K}^{\A}\in\S$ and $\mathbf{S}^{\A}\in\S$\hfill
  (Hilbert axioms)
\item If $a\to b\in\S$ and $a\in \S$, then $b\in\S$\hfill
  (modus ponens)
\end{enumerate}
writing $\mathbf{K}^{\A}=(\Lam{xy}{x})^{\A}$ and
$\mathbf{S}^{\A}=(\Lam{xyz}{xz(yz)})^{\A}$.
A separator $\S$ is \emph{consistent} (resp.\ \emph{classical}) when
$\bot\notin\S$ (resp.\ when $\cc^{\A}\in\S$).
An \emph{implicative algebra} is just an implicative structure
$(\A,{\ile},{\to})$ equipped with a separator $\S\subseteq\A$.
An implicative algebra $(\A,{\ile},{\to},\S)$ is \emph{consistent}
(resp.\ \emph{classical}) when the underlying separator
$\S\subseteq\A$ is.

Separators enjoy the following closure properties:
\begin{fact}
  For all elements $a,b\in\A$ and for all separators
  $\S\subseteq\A$:
  \begin{enumerate}
  \item If $a\in \S$ and $b\in \S$, then $ab\in \S$
  \item If $a\in \S$ and $b\in \S$, then $a\times b\in \S$
  \item If $a\in \S$ or $b\in \S$, then $a+b\in \S$
  \item If~$t$ is a closed $\lambda$-term with parameters in~$\S$,
    then $(t)^{\A}\in\S$.
  \end{enumerate}
\end{fact}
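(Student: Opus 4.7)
The plan is to establish the items in the order (1), (4), (2), (3), since (2) and (3) will follow quickly from (4). For (1), I will use the adjunction $ab \ile c$ iff $a \ile b \to c$: specialising to $c = ab$ yields $a \ile b \to ab$, so $b \to ab \in \S$ by upward closure from $a \in \S$, and modus ponens with $b \in \S$ concludes $ab \in \S$.

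For item (4), rather than attempting a direct structural induction on $t$, I would proceed via combinatory translation. The pitfall with a direct induction is the abstraction case: $(\Lam{x}{u})^{\A} = \ilambda(a \mapsto (u[x{:=}a])^{\A})$ is a meet indexed over all of $\A$, not just over $\S$, so pointwise membership in $\S$ of the mapped values does not lift to membership of the meet. This is the main obstacle of the proof. Instead, given a closed $\lambda$-term $t$ with parameters in $\S$, the standard bracket abstraction (relying solely on $\mathbf{K}$ and $\mathbf{S}$) rewrites $t$ into a closed expression $t^{*}$ built only from $\mathbf{K}$, $\mathbf{S}$, the parameters of $t$, and application, in such a way that $t^{*} \redd_{\beta} t$. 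By the $\beta$-reduction fact of the preceding subsection, $(t^{*})^{\A} \ile (t)^{\A}$. Now $(t^{*})^{\A}$ is obtained by finitely many applications of elements of $\S$ (using $\mathbf{K}^{\A}, \mathbf{S}^{\A} \in \S$ from the separator axioms, the parameters in $\S$ by assumption, and item~(1) at each application node), so $(t^{*})^{\A} \in \S$. Upward closure of $\S$ then yields $(t)^{\A} \in \S$.

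Finally, (2) and (3) follow by exhibiting explicit closed realizers with parameters in $\S$ and invoking (4). For (2), the Church pair $\Lam{f}{fab}$ has parameters $a, b \in \S$, and semantic subtyping (as invoked in the passage on system~F) gives $(\Lam{f}{fab})^{\A} \ile (a \to b \to c) \to c$ for every $c \in \A$, hence $(\Lam{f}{fab})^{\A} \ile a \times b$; (4) together with upward closure then yields $a \times b \in \S$. For (3), the left-injection $\Lam{fg}{fa}$ satisfies $(\Lam{fg}{fa})^{\A} \ile (a \to c) \to (b \to c) \to c$ for every $b, c \in \A$, whence $(\Lam{fg}{fa})^{\A} \ile a + b$; it lies in $\S$ by (4), and upward closure concludes. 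The symmetric term $\Lam{fg}{gb}$ handles the alternative case $b \in \S$.
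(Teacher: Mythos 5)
Your proof is correct. The paper states this Fact without proof (it is imported from the theory of implicative algebras in~\cite{Miq20}), and your argument is precisely the standard one there: item~(1) from the adjunction $a\ile b\to ab$ plus upward closure and modus ponens, item~(4) via bracket abstraction into an applicative combination of $\mathbf{K}$, $\mathbf{S}$ and the parameters followed by the $\beta$-monotonicity of the interpretation, and items~(2) and~(3) from the explicit pairing and injection terms. You also correctly identify and avoid both the abstraction-case pitfall in a naive induction for~(4) and the potential circularity, by establishing~(1) before~(4) and~(4) before~(2) and~(3).
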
\noindent
(Last item intuitively expresses that all `proofs'---possibly with
`axioms' in~$\S$---are `true'.)

In the particular case where the implicative structure~$\A$ is a
complete Heyting algebra (Example~\ref{ex:ImpStruct}~(1)), it can be
shown that a separator is the same as a filter.
However, in the general case, separators are \emph{not} filters, for
that they are not closed under binary meets (in general).

\subsection{The implicative tripos}
In any implicative algebra $\A=(\A,{\ile},{\to},\S)$, the separator
$\S$ induces a preorder of \emph{entailment} on the set~$\A$, written
$a\ent_{\S}b$ and defined by
$$a\ent_{\S}b~:\equiv~(a\to b)\in\S
\eqno(a,b\in\A)$$
It follows from the definition of the notion of a separator that the
preordered set $(\A,{\ent_{\S}})$ is a pre-Heyting algebra (and even
a pre-Boolean algebra when the separator~$\S$ is classical).

More generally, for each set~$I$ the separator~$\S$ induces a preorder
of entailment on the set $\A^I$ of $I$-indexed families of truth
values, written $\ent_{\S[I]}$ and defined by
$$a\ent_{\S[I]}b~:\equiv~\bigimeet_{i\in I}(a_i\to b_i)\in\S
\eqno(a,b\in\A^I)$$
Again, the preordered set $(\A^I,{\ent_{\S[I]}})$ is a pre-Heyting (or
pre-Boolean) algebra.
Moreover if can be shown~\cite{Miq20} that:
\begin{theorem}[Implicative tripos]
  For each implicative algebra $\A=(\A,{\ile},{\to},\S)$,
  the correspondence mapping each set~$I$ to the pre-Heyting algebra
  $(\A^I,\S[I])$ is functorial and constitutes a $\Set$-based tripos,
  in the sense of Hyland, Johnstone and Pitts~\cite{HJP80}.
\end{theorem}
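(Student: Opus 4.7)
The plan is to verify the three defining conditions of a $\Set$-based tripos in the sense of~\cite{HJP80}: functoriality of $I \mapsto (\A^I, \ent_{\S[I]})$ into preordered Heyting algebras, existence of left and right adjoints to reindexing satisfying Beck--Chevalley, and existence of a generic predicate.

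For functoriality, set $\P(I) := (\A^I, \ent_{\S[I]})$ and, for each $f : I \to J$ in $\Set$, let $\P(f) : \A^J \to \A^I$ be precomposition: $\P(f)(\phi) := \phi \circ f$. Contravariant functoriality in $\Set$ is immediate. Since $\top$, $\bot$, $\imeet$, $\ijoin$ and $\to$ on $\A^I$ are all computed pointwise from $\A$, the map $\P(f)$ preserves these operations strictly and thus descends to a homomorphism of the underlying pre-Heyting algebras. That each $(\A^I, \ent_{\S[I]})$ really is a pre-Heyting algebra reduces, after unfolding $\ent_{\S[I]}$, to checking that each intuitionistic axiom scheme is witnessed by a closed $\lambda$-term with parameters in~$\A$ whose interpretation lies in~$\S$; this follows from the closure of $\S$ under $\lambda$-term interpretation recorded above.

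For the quantifiers, given $f : I \to J$ define
$$(\exists_f \phi)(j) := \bigexists_{i \in f^{-1}(j)} \phi(i), \qquad (\forall_f \phi)(j) := \bigforall_{i \in f^{-1}(j)} \phi(i),$$
using the implicative encodings of Section~2. The adjunctions $\exists_f \dashv \P(f) \dashv \forall_f$ reduce, after unfolding $\ent_{\S[I]}$ and $\ent_{\S[J]}$, to showing that specific elements of $\A$, obtained from the second-order introduction and elimination terms for $\bigexists$ and $\bigforall$, belong to~$\S$. Again these witnesses are interpretations of closed $\lambda$-terms with parameters in~$\S$, so the same closure property applies. Beck--Chevalley along a pullback square in $\Set$ unfolds to the same kind of realizability check, now exploiting that set-theoretic pullback along functions computes the relevant preimage fibers compatibly.

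A generic predicate is then constructed economically: take the classifying object to be $\A$ itself and the generic element to be $\sigma := \id_\A \in \A^\A$. For any $\phi \in \A^I$, the map $\phi : I \to \A$ serves as its own classifying morphism, since $\P(\phi)(\sigma) = \id_\A \circ \phi = \phi$. The main obstacle I expect is not conceptual but one of bookkeeping: one must produce explicit closed $\lambda$-terms, uniform in $I$, $J$ and $f$, witnessing each adjunction triangle and the Beck--Chevalley identity, so that closure of $\S$ delivers membership independently of the index sets. Care is in particular needed with $\exists_f$, whose witnesses invoke the higher-rank second-order encoding of $\bigexists$.
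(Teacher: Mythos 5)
Your outline is correct and follows essentially the construction that the paper itself defers to: the statement is not proved here but cited from~\cite{Miq20}, where the tripos is built exactly as you describe --- reindexing by precomposition, $\forall_f$ as the fibrewise meet, $\exists_f$ via the second-order encoding of $\bigexists$, uniform closed $\lambda$-term witnesses for the adjunctions (with Beck--Chevalley in fact holding on the nose for these pointwise definitions), and the generic predicate $\id_{\A}\in\A^{\A}$. Nothing in your sketch would fail; the remaining work is exactly the bookkeeping of explicit trackers that you identify.
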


As shown in~\cite{Miq20}, the above construction encompasses all known
triposes known so far: forcing triposes (induced by complete Heyting
algebras), intuitionistic realizability triposes (induced by total and
even partial combinatory algebras, using some completion trick) as
well as classical realizability triposes (induced by abstract Krivine
structures).

Actually, we can even prove~\cite{Miq20b} that:
\begin{theorem}[Completeness]\label{th:Completeness}
  Each $\Set$-based tripos (in the sense of Hyland, Johnstone and
  Pitts~\cite{HJP80}) is isomorphic to an implicative tripos.
\end{theorem}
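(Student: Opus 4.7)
The plan is to construct, from a given $\Set$-based tripos $P$, an implicative algebra $(\A, \ile, \to, \S)$ whose associated implicative tripos recovers $P$ up to equivalence. The starting point is the existence of a generic predicate: any $\Set$-based tripos comes equipped with a pair $(\Sigma, \sigma)$ where $\sigma \in P(\Sigma)$ and every $\phi \in P(I)$ is of the form $P(\chi_\phi)(\sigma)$ for some (non-unique) classifying map $\chi_\phi : I \to \Sigma$. I would take the underlying set of $\A$ to be $\Sigma$, possibly after quotienting pointwise by mutual entailment in $P$, with $\ile$ given by entailment of classified predicates.

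Next I would equip $\A$ with the structure of a complete lattice and define the implication. Arbitrary meets $\bigimeet_{b \in B} b$ are obtained from the tripos by forming the predicate in $P(B)$ that reads off each $\sigma$-value pointwise on $B$, then universally quantifying over $B$ (i.e.\ applying the right adjoint to reindexing along $B \to 1$), and finally classifying the resulting predicate back into $\Sigma$. The implication $\to$ is defined in the same style, by classifying the element of $P(\Sigma \times \Sigma)$ obtained by applying the fibrewise Heyting implication to the pullbacks of $\sigma$ along the two projections $\pi_1, \pi_2 : \Sigma \times \Sigma \to \Sigma$. Conditions (1) and (2) of an implicative structure then follow from monotonicity of Heyting implication and from the commutation of implication with universal quantification, a consequence of Beck--Chevalley for triposes.

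The separator is defined as the set of those $a \in \A$ for which the classified predicate $P(\lceil a \rceil)(\sigma) \in P(1)$ is top up to entailment, where $\lceil a \rceil : 1 \to \Sigma$ picks out $a$. Upwards closure is immediate; the Hilbert axioms $\mathbf{K}^{\A}, \mathbf{S}^{\A} \in \S$ follow from the fact that the corresponding propositional tautologies hold in every pre-Heyting algebra, hence are valid in every fibre of $P$; and closure under modus ponens comes from the adjunction characterising Heyting implication within each fibre.

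Finally one verifies that the implicative tripos built from $(\A, \ile, \to, \S)$ is isomorphic to $P$. On objects, $\phi \mapsto \chi_\phi$ is a bijection (up to entailment) between $P(I)$ and $\A^I$; transport of entailments reduces to showing that $\phi$ entails $\psi$ in $P(I)$ iff $\bigimeet_{i \in I}(\chi_\phi(i) \to \chi_\psi(i)) \in \S$, which is once again a matter of commuting universal quantification with implication. The main technical obstacle is ensuring that $\A$ as constructed is a genuine complete lattice rather than merely a preorder, and that $\to$ is a literal binary operation on $\A$ satisfying the exact identity $a \to \bigimeet_{b \in B} b = \bigimeet_{b \in B}(a \to b)$; this requires either a careful choice of canonical classifying maps or a passage to a strict order-quotient, and relies crucially on universal quantification being right adjoint to weakening in the tripos.
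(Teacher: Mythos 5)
This theorem is not proved in the paper: the authors quote it from~\cite{Miq20b}, so your sketch has to be measured against the argument given there. Your starting point --- extract a generic predicate $\sigma\in P(\Sigma)$ and try to read an implicative algebra off the object $\Sigma$ --- is the right one, but the construction as you set it up cannot be completed, and the difficulty you defer to your final paragraph (``ensuring that $\A$ is a genuine complete lattice'') is not a technicality to be repaired by choosing canonical classifiers or passing to the poset reflection; it is the entire content of the theorem. The problem is that your order and your separator are both read off from the fibre $P(1)$: you set $a\ile b$ iff $\lceil a\rceil^*\sigma\ent_{P(1)}\lceil b\rceil^*\sigma$, and $a\in\S$ iff $\lceil a\rceil^*\sigma$ is top in $P(1)$. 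With these choices $\S$ is exactly the upward closure ${\uparrow}\{\top\}$, i.e.\ a principal filter (and, after the order-quotient, the trivial separator $\{\top\}$). By the characterization of forcing triposes (Theorem~\ref{th:CharacForTrip}, i.e.\ Theorem~4.13 of~\cite{Miq20}), an implicative algebra whose separator is a principal filter induces a forcing tripos; since not every $\Set$-based tripos is localic (the effective tripos is the standard counterexample), your algebra cannot reconstruct $P$.

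The tension is already visible in your definition of meets. The element of $\Sigma$ classifying $\forall_{B\to 1}(\iota_B^*\sigma)$ is a \emph{uniform} quantification: $c\ent_{P(1)}\forall_B(\iota_B^*\sigma)$ holds iff the constant family at $c$ entails $\iota_B^*\sigma$ in $P(B)$, which in a non-localic tripos is strictly stronger than having $c\ent_{P(1)}\lceil b\rceil^*\sigma$ for every $b\in B$. Hence this element is a lower bound of $B$ for your preorder but not the greatest one, so $(\A,\ile)$ equipped with your $\bigimeet$ is not a complete lattice and axiom (2) of implicative structures is not even well-posed; conversely, the genuine infimum for the $P(1)$-entailment preorder (when it exists) yields pointwise semantics and the forcing collapse described above. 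No ordering of $\Sigma$ definable from the tripos data reconciles the two: in the realizability tripos over a partial combinatory algebra the required order is literal inclusion of subsets of the pca, which the tripos does not see. This is why the proof in~\cite{Miq20b} changes the carrier: the complete lattice is manufactured from subsets of $\Sigma$ (so that arbitrary meets exist for trivial set-theoretic reasons), the implication is induced by a chosen classifier of $\pi_1^*\sigma\Rightarrow\pi_2^*\sigma$ in $P(\Sigma\times\Sigma)$, and, crucially, membership of a subset $a\subseteq\Sigma$ in the separator is defined by the uniform judgement $\top\ent_{P(a)}\iota_a^*\sigma$ rather than by truth in $P(1)$. It is this last clause that lets the separator contain elements that are valid without being equivalent to $\top$, which is exactly the information your construction discards.
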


\section{Construction of the category $\AsmA$}
\label{s:AsmA}

From now on, $\A=(\A,{\ile},{\to},\S)$ denotes a given implicative
algebra.

\subsection{Definition of the category~$\AsmA$}
From the implicative algebra $\A$, we define the category $\AsmA$ of
\emph{assemblies} on~$\A$ as follows:
\begin{itemize}
\item The objects of $\AsmA$ are the \emph{assemblies} on~$\A$,
  namely:
  the pairs $X=(\Car{X},\Ex{X})$ formed by
  an arbitrary set $\Car{X}$, called the \emph{carrier} of~$X$,
  and a function $\Ex{X}:\Car{X}\to\S$,
  called the \emph{existence predicate} of~$X$,
  that associates to each element $x\in\Car{X}$ a truth value
  $\Ex{X}(x)$ (in the separator $\S\subseteq\A$) that intuitively
  `certifies' the existence of~$x$ in the assembly~$X$.
\item Given two assemblies $X=(\Car{X},\Ex{X})$ and
  $Y=(\Car{Y},\Ex{Y})$, the morphisms from~$X$ to~$Y$ are the
  set-theoretic functions $f:\Car{X}\to\Car{Y}$ satisfying the
  \emph{tracking condition} for~$f$:
  $$\bigimeet_{x\in X}\bigl(\Ex{X}(x)\to\Ex{Y}(f(x))\bigr)~\in~\S\,.$$
  Note that in order to prove the above condition, it is sufficient
  (and obviously necessary) to exhibit an element $s_f\in\S$ ---which we
  shall call a \emph{tracker} of~$f$--- such that
  $$s_f~\ile~\bigimeet_{x\in X}\bigl(\Ex{X}(x)\to\Ex{Y}(f(x))\bigr)\,.$$
  (In practice, we shall provide most trackers as `proofs' of the
  form $s_f:=(t)^{\A}$, where~$t$ is a pure $\lambda$-term constructed
  following the Curry-Howard correspondence.)
\end{itemize}

Given assemblies $X$, $Y$, $Z$ and morphisms $f:X\to Y$ and $g:Y\to Z$
respectively tracked by elements $s_f,s_g\in\S$, we easily check that
$$(\Lam{x}{s_g\,(s_f\,x)})^{\A}~\ile~
\bigimeet_{x\in X}\bigl(\Ex{X}(x)\to\Ex{Z}((g\circ f)(x))\bigr)\,,$$
so that $g\circ f:X\to Z$ is also a morphism.
Finally, we have
$$(\Lam{x}{x})^{\A}~\ile~
\bigimeet_{x\in X}\bigl(\Ex{X}(x)\to\Ex{X}(\id_X(x))\bigr)\,,$$
hence $\id_X:X\to X$ is a morphism as well.
This makes clear that $\AsmA$ is a category.

Recall that in the category of assemblies, morphisms are just
set-theoretic maps satisfying the corresponding tracking condition.
Therefore, two morphisms are equal \emph{as morphisms} in the
category~$\AsmA$ if and only if they are equal as set-theoretic
functions.

\subsection{The adjoint functors $\Gamma:\AsmA\to\Set$
  and $\Delta:\Set\to\AsmA$}
The category of assemblies on~$\A$ naturally comes with a forgetful
functor $\Gamma:\AsmA\to\Set$ mapping each assembly~$X$ to its carrier
$\Gamma{X}:=\Car{X}$ and each morphism $f:X\to Y$ to itself
as a set-theoretic function, that is:
$\Gamma{f}:=f:\Car{X}\to\Car{Y}$.
(So that by construction, the functor $\Gamma$ is faithful.)

The functor $\Gamma:\AsmA\to\Set$ is actually the left adjoint of a
functor $\Delta:\Set\to\AsmA$ that can be defined as follows:
\begin{itemize}
\item For each set $X$, we define $\Delta{X}$ as the assembly whose
  carrier is $\Car{\Delta{X}}:=X$ and whose existence predicate is the
  constant function $\Ex{\Delta{X}}:=(\_\mapsto\top)$ mapping each
  element of~$X$ to the truth value $\top\in\S$.
\item For each set-theoretic map $f:X\to Y$, we define the morphism
  $\Delta{f}:\Delta{X}\to\Delta{Y}$ as the map~$f$ itself, observing
  that such a map is tracked by the truth value $\top\in\S$.
\end{itemize}
By construction, the functor $\Delta:\Set\to\AsmA$ is full and
faithful.

\begin{proposition}\label{p:AdjGammaDelta}
  We have the adjunction:
  $\mskip35mu\xymatrix@C=12pt{
    \mskip-35mu\AsmA\ar@/^1pc/[rr]^{\ds\Gamma}&\bot&
    \Set\ar@/^1pc/[ll]^{\ds\Delta}\\
  }$.
\end{proposition}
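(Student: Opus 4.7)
The plan is to verify the adjunction by showing that the two hom-sets are not merely naturally isomorphic but literally equal as sets of underlying functions, so that the hom-bijection reduces to the identity on set-theoretic data and naturality becomes automatic.

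First I would establish two small algebraic facts about any implicative algebra. On the one hand, $a \to \top = \top$ for every $a \in \A$: this is axiom~(2) of implicative structures applied with $B = \emptyset$, since the empty meet in a complete lattice is $\top$. On the other hand, $\top \in \S$: from the Hilbert axioms we have $\mathbf{K}^{\A} \in \S$, and since $\mathbf{K}^{\A} \ile \top$, upward closure of $\S$ forces $\top \in \S$.

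With these in hand, I would fix an assembly $X = (\Car{X}, \Ex{X})$ and a set $Y$, and inspect the tracking condition for a set function $f : \Car{X} \to Y$ viewed as a candidate morphism $X \to \Delta Y$. Since $\Ex{\Delta Y} \equiv \top$, this condition reads $\bigimeet_{x \in \Car{X}}(\Ex{X}(x) \to \top) \in \S$, which by the first fact collapses to $\top \in \S$, true by the second. Hence every set function from $\Car{X}$ to $Y$ is automatically a morphism in $\AsmA$, and
\[
\Hom_{\AsmA}(X, \Delta Y) \;=\; \Hom_{\Set}(\Car{X}, Y) \;=\; \Hom_{\Set}(\Gamma X, Y)
\]
as sets, via the identity bijection $f \mapsto f$.

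Finally, naturality in $X$ and $Y$ is immediate: both $\Gamma$ and $\Delta$ act as the identity on underlying set-theoretic data, and composition in $\AsmA$ coincides with composition of the underlying functions, so the identification above commutes with precomposition and postcomposition on both sides. There is no real obstacle here; the only point worth pausing on is the empty-meet convention that makes $a \to \top = \top$ drop out of axiom~(2), on which the whole argument rests.
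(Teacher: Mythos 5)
Your proof is correct and follows essentially the same route as the paper: the paper likewise observes that every set-theoretic map $f:\Car{X}\to\Car{\Delta{Y}}$ is tracked by $\top\in\S$, so that $\Hom_{\AsmA}(X,\Delta{Y})=\Hom_{\Set}(\Gamma{X},Y)$ literally as sets. Your extra verifications that $a\to\top=\top$ and $\top\in\S$ merely make explicit what the paper leaves implicit.
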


\begin{proof}
  Given objects $X\in\AsmA$ and $Y\in\Set$, it suffices to observe
  that each set-theoretic map $f:\Car{X}\to\Car{\Delta{Y}}$ is tracked
  by the truth value $\top\in\S$, hence we have
  $$\Hom_{\AsmA}(X,\Delta{Y})~=~\Hom_{\Set}(\Gamma{X},Y)\,.
  \eqno\mbox{\qedhere}$$
\end{proof}

From the above adjunction, it is clear that $\Gamma:\AsmA\to\Set$
preserves all colimits that turn out to exist in~$\AsmA$, whereas
$\Delta:\Set\to\AsmA$ preserves all limits in $\Set$.

\subsection{Characterizing monomorphisms and epimorphisms}

\begin{proposition}[Monos and epis in $\AsmA$]\label{p:MonoEpi}
  In the category $\AsmA$, an arrow $f:X\to Y$
  is mono (resp.\ epi) if and only if it is injective
  (resp.\ surjective) as a set-theoretic map.
\end{proposition}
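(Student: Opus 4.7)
The plan is to handle monomorphisms and epimorphisms separately, in each case splitting into an easy and a nontrivial direction. The easy implications (injective $\limp$ mono, and surjective $\limp$ epi) are immediate from two structural features of $\AsmA$ stressed in Section~\ref{s:AsmA}: two morphisms of $\AsmA$ are equal if and only if their underlying set-maps are equal, and composition in $\AsmA$ is plain set-theoretic composition. Thus if $f\circ g_1=f\circ g_2$ and $f$ is injective then $g_1=g_2$ pointwise; dually for surjectivity.

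For mono $\limp$ injective, I would assume $f\colon X\to Y$ is mono in $\AsmA$ and $f(x_1)=f(x_2)$, and produce test morphisms $g_i\colon\Delta\{*\}\to X$ with $g_i(*):=x_i$; monicity then forces $g_1=g_2$, whence $x_1=x_2$. The point to verify is that each $g_i$ is tracked. The tracking condition collapses to $\top\to\Ex{X}(x_i)\in\S$, and setting $a:=\Ex{X}(x_i)\in\S$, the closed $\lambda$-term $\lam{y}{a}$ has its parameter in $\S$, so $(\lam{y}{a})^{\A}=\bigimeet_{b\in\A}(b\to a)\in\S$ by the fact that $\S$ is closed under interpretation of $\lambda$-terms with parameters in $\S$; since $\bigimeet_{b\in\A}(b\to a)\ile\top\to a$, upward closure of $\S$ gives $\top\to a\in\S$, providing the desired tracker.

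For epi $\limp$ surjective, I would assume $f$ is epi and argue by contradiction: pick $y_0\in\Car{Y}\setminus f(\Car{X})$, use $\Delta\{0,1\}$ as codomain, and set $g_1\equiv 0$ and $g_2(y):=1$ iff $y=y_0$. By Proposition~\ref{p:AdjGammaDelta} we have $\Hom_{\AsmA}(Y,\Delta\{0,1\})\cong\Hom_{\Set}(\Car{Y},\{0,1\})$, so both $g_1$ and $g_2$ are automatically tracked (concretely their tracking meet collapses to $\top\in\S$). Since $g_1\circ f=g_2\circ f$ (both are constantly $0$ on $\Car{X}$) but $g_1\ne g_2$, this contradicts $f$ being epi.

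The main obstacle is the construction of adequate test morphisms in $\AsmA$. For the epi direction this is cost-free, as the adjunction with $\Delta$ makes every set map into a $\Delta$-assembly a valid morphism. The mono direction is slightly more delicate because the test morphisms go \emph{out} of $\Delta\{*\}$, so the adjunction does not give the tracker for free; this is where the stability of $\S$ under the $\lambda$-term interpretation does the real work.
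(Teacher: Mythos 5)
Your proof is correct and follows essentially the same route as the paper: test morphisms out of $\Delta 1$ tracked by $\top\to\Ex{X}(x_i)$ for the mono direction (you justify membership of this tracker in $\S$ more explicitly than the paper does), and the adjunction $\Gamma\dashv\Delta$ to reduce the epi direction to $\Set$. The only cosmetic difference is that for epis the paper shows $f$ is epi in $\Set$ using an arbitrary test set $Z$, whereas you instantiate the standard two-element characteristic-function counterexample directly; both are fine.
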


\begin{proof}
  Let $X$ and~$Y$ be assemblies.
  Since each morphism $f:X\to Y$ is also a set-theoretic map
  $f:\Car{X}\to\Car{Y}$, it is clear that if~$f$ is injective
  (resp.\ surjective), then it is mono (resp.\ epi) as a morphism
  in~$\AsmA$.
  Let us now prove the converse implications:
  \begin{itemize}
  \item Suppose that $f:X\to Y$ is mono in $\AsmA$.
    Given $x_1,x_2\in\Car{X}$ such that $f(x_1)=f(x_2)$
    ($\in\Car{Y}$), we want to prove that $x_1=x_2$.
    For that, we write $1=\{*\}$ the terminal object of~$\Set$,
    and consider the morphisms $h_1,h_2:\Delta{1}\to X$ defined
    by $h_1(*)=x_1$ and $h_2(*)=x_2$.
    (The maps $h_1,h_2:1\to\Car{X}$ are tracked by the truth values
    $(\top\to\Ex{X}(x_1))\in\S$ and $(\top\to\Ex{X}(x_2))\in\S$,
    respectively.)
    Since $f(x_1)=f(x_2)$, we get
    $f\circ h_1=f\circ h_2$ hence $h_1=h_2$ in $\AsmA$ (since $f$ is
    mono) and therefore $x_1=x_2$.
  \item Suppose that $f:X\to Y$ is epi in $\AsmA$.
    We want to prove that $f:\Car{X}\to\Car{Y}$ is surjective, which
    amounts to prove that~$f$ is epi in $\Set$.
    For that, consider a set~$Z$ with set-theoretic maps
    $h_1,h_2:\Car{Y}\to Z$ such that $h_1\circ f=h_2\circ f$.
    Using the equality
    $\Hom_{\Set}(\Car{Y},Z)=\Hom_{\AsmA}(Y,\Delta{Z})$
    (cf\ proof of Prop.~\ref{p:AdjGammaDelta}), we observe
    that these set-theoretic maps are also morphisms
    $h_1,h_2:Y\to\Delta{Z}$ in $\AsmA$, hence $h_1=h_2$ in $\AsmA$
    (since~$f$ is epi), and therefore $h_1=h_2$ in~$\Set$.\qedhere
  \end{itemize}
\end{proof}

However, the category $\AsmA$ is not balanced in general, since there
are morphisms that are both mono and epi without being isomorphisms.
For a counter-example, consider the two-point
assembly~$\mathbf{2}$ defined by%
\footnote{The reader is invited to check that in the category $\AsmA$,
  the object $\mathbf{2}$ is the coproduct $\mathbf{1}+\mathbf{1}$
  (i.e.\ the type of Booleans), writing $\mathbf{1}:=\Delta{1}$ the
  terminal object of the category.}
$$\begin{array}{r@{~{}~}c@{~{}~}l@{~{}~}c@{~{}~}l@{~{}~}c@{~{}~}l}
  \Car{\mathbf{2}}&:=&\multicolumn{3}{l}{2~=~\{0,1\}} \\[6pt]
  \Ex{\mathbf{2}}(0)&:=&(\Lam{xy}{x})^{\A}&=&
  \ds\bigimeet_{\mcent{a,b\in\A}}(a\to b\to a)&\in&\S \\[3pt]
  \Ex{\mathbf{2}}(1)&:=&(\Lam{xy}{y})^{\A}&=&
  \ds\bigimeet_{\mcent{a,b\in\A}}(a\to b\to b)&\in&\S \\
\end{array}$$
as well as the morphism $i:\mathbf{2}\to\Delta{2}$ whose underlying
set-theoretic function $i:2\to 2$ is the identity function (tracked
by $\top\in\S$).

\begin{proposition}
  The morphism $i:\mathbf{2}\to\Delta{2}$ is always mono and epi
  (independently from the implicative algebra~$\A$), but it is an
  isomorphism if and only if the separator $\S\subseteq\A$ is a
  filter.
\end{proposition}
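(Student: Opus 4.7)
Since the underlying set-theoretic function of $i$ is the identity on the two-element set, hence bijective, Proposition~\ref{p:MonoEpi} yields at once that $i$ is both mono and epi, independently of the choice of $\A$ and of $\S$. The content of the statement is therefore concentrated in the isomorphism criterion.

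My plan for this part is to first translate ``$i$ is iso'' into a concrete membership in $\S$. Since $i$ is the identity on $2$, its candidate inverse is again the identity $2\to 2$, viewed now as a map $\Delta{2}\to\mathbf{2}$; the morphism $i$ is thus an isomorphism in $\AsmA$ exactly when this reverse identity satisfies its tracking condition
$$\bigimeet_{x\in 2}\bigl(\top\to\Ex{\mathbf{2}}(x)\bigr)~\in~\S\,.$$
By axiom~(2) of implicative structures (distribution of $\to$ over meets in the right argument), the left-hand side equals $\top\to\bigl(\mathbf{K}^{\A}\imeet(\Lam{xy}{y})^{\A}\bigr)$. I would then strip the outer $\top\to\cdot$: since $\mathbf{K}^{\A}\ile a\to\top\to a$, upwards closure together with modus ponens gives $\top\to a\in\S$ whenever $a\in\S$, while conversely $\top\in\S$ (because $\mathbf{K}^{\A}\ile\top$), and modus ponens takes $\top\to a\in\S$ back to $a\in\S$. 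The tracking condition therefore reduces to the single membership
$$m~:=~\mathbf{K}^{\A}\imeet(\Lam{xy}{y})^{\A}~\in~\S\,.$$

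The forward implication is then immediate: if $\S$ is a filter, then $\mathbf{K}^{\A}\in\S$ by the Hilbert axioms and $(\Lam{xy}{y})^{\A}\in\S$ as the interpretation of a closed $\lambda$-term (closure of separators under $\lambda$-definable proofs), so their meet belongs to $\S$ as well. The hard part, and the only place where the full force of the filter condition really enters, is the converse: deriving closure of $\S$ under \emph{arbitrary} binary meets from the single membership $m\in\S$. The plan there is a simple combinator trick. For $a,b\in\A$, monotonicity of application against $m\ile\mathbf{K}^{\A}$ and $m\ile(\Lam{xy}{y})^{\A}$ yields $mab\ile\mathbf{K}^{\A}ab\ile a$ and $mab\ile(\Lam{xy}{y})^{\A}ab\ile b$, so $mab\ile a\imeet b$. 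If moreover $a,b\in\S$, then closure of $\S$ under application gives $mab\in\S$, and upwards closure delivers $a\imeet b\in\S$. Combined with upwards closure and $\top\in\S$, this exhibits $\S$ as a filter and closes the argument.
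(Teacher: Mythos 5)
Your proof is correct and follows essentially the same route as the paper: reduce ``$i$ is iso'' to the tracking condition for the reverse identity, rewrite it as $\top\to\bigl((\Lam{xy}{x})^{\A}\imeet(\Lam{xy}{y})^{\A}\bigr)$ using axiom (2), strip the outer $\top\to\cdot$, and identify the resulting element with the non-deterministic choice operator $\Fork^{\A}$. The only difference is that where the paper invokes \cite[Prop.~3.27]{Miq20} for the equivalence $\Fork^{\A}\in\S$ iff $\S$ is a filter, you prove it inline via the standard combinator argument ($mab\ile a\imeet b$), which is a correct and welcome self-contained addition.
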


\begin{proof}
  It is clear that the morphism $i:\mathbf{2}\to\Delta{2}$, that is
  both mono and epi in~$\AsmA$ (since bijective in~$\Set$), is an
  isomorphism if and only if the inverse set-theoretic function
  $\id_{2}:2\to 2$ is tracked as a morphism of type
  $\Delta{2}\to\mathbf{2}$ in $\AsmA$.
  For that, we observe that
  $$\bigimeet_{x\in 2}\bigl(\Ex{\Delta{2}}(x)\to\Ex{\mathbf{2}}(x)\bigr)
  ~=~\top\to\bigimeet_{x\in 2}\Ex{\mathbf{2}}(x)
  ~=~\top\to\Fork^{\A}\,,$$
  writing $\Fork^{\A}:=\bigimeet_{a,b\in\A}(a\to b\to a\imeet b)
  ~=~(\Lam{xy}{x})^{\A}\imeet(\Lam{xy}{y})^{\A}$
  the non-deterministic choice operator in~$\A$~\cite[\Sect\,3.7]{Miq20}.
  We conclude observing that
  $$\begin{array}[b]{r@{\qquad}c@{\qquad}l}
    \ds\bigimeet_{x\in 2}
    \bigl(\Ex{\Delta{2}}(x)\to\Ex{\mathbf{2}}(x)\bigr)
    ~\in~\S
    &\text{iff}& \top\to\Fork^{\A}~\in~\S \\[-8pt]
    &\text{iff}& \Fork^{\A}~\in~\S \\[3pt]
    &\text{iff}& \S~\text{is a filter
      (from \cite[Prop. 3.27]{Miq20}).} \\
  \end{array}\eqno\mbox{\qedhere}$$
\end{proof}

\begin{remark}
  Recall that in implicative algebras, separators are in general not
  filters.
  A typical counter-example is the (intuitionistically consistent)
  implicative algebra defined by:
  \begin{itemize}
  \item $\A:=\Pow(\Lambda)$\quad(writing~$\Lambda$ the set
    of closed $\lambda$-terms up to $\beta$-conversion)
  \item $a\ile b~:\equiv~a\subseteq b$\quad
    (inclusion of sets of $\lambda$-terms)
  \item $a\to b~:=~\{t\in\Lambda\mid
    \forall u\in a,~tu\in b\}$\quad (Kleene arrow)
  \item $\S~:=~\{a\in\A\mid a~\text{inhabited}\}$
  \end{itemize}
  To show that the separator~$\S$ is not a filter, take
  $a:=\{\Lam{xy}{x}\}\in\S$ and $b:=\{\Lam{xy}{y}\}\in\S$,
  and observe that $a\cap b=\varnothing$
  (from the Church-Rosser property), hence
  $a\cap b\notin\S$.
\end{remark}

\section{Structure of quasi-topos}
\label{s:QTopos}

The aim of this section is to prove that the category $\AsmA$ of
assemblies is a quasi-topos, namely: that the category $\AsmA$ has all
finite limits (\Sect~\ref{ss:FinLim}) and colimits
(\Sect~\ref{ss:FinColim}), that it is locally Cartesian closed
(\Sect~\ref{ss:LCCC}) and has a strong subobject classifier
(\Sect~\ref{ss:SSOC}).
Moreover, we shall prove that this quasi-topos has a natural numbers
object (\Sect~\ref{ss:NNO}).

\subsection{Finite limits}\label{ss:FinLim}
Let us first prove that $\AsmA$ has all finite limits.
\begin{proposition}\label{p:Terminal}
  The assembly $\mathbf{1}:=\Delta{1}$ (where $1=\{*\}$)
  is the terminal object of~$\AsmA$.
\end{proposition}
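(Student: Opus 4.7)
The cleanest route is to invoke the adjunction $\Gamma\dashv\Delta$ of Proposition~\ref{p:AdjGammaDelta}, which gives for every assembly $X$ the natural bijection
\[
\Hom_{\AsmA}(X,\Delta{1})~=~\Hom_{\Set}(\Gamma{X},1).
\]
Since the singleton $1=\{*\}$ is terminal in $\Set$, the right-hand side is a one-element set for every $X$, so $\mathbf{1}=\Delta{1}$ is terminal in $\AsmA$. (This is also an instance of the general fact that a right adjoint preserves the terminal object.)

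If one prefers an explicit argument rather than appealing to the adjunction, the plan is to check the two defining properties of a terminal object directly. For existence, given an assembly $X=(\Car{X},\Ex{X})$, the unique set-theoretic map $!_X:\Car{X}\to\{*\}$ is a morphism in $\AsmA$ because its tracking condition reduces to
\[
\bigimeet_{x\in X}\bigl(\Ex{X}(x)\to\Ex{\mathbf{1}}(!_X(x))\bigr)
~=~\bigimeet_{x\in X}\bigl(\Ex{X}(x)\to\top\bigr)~=~\top,
\]
using $a\to\top=\top$ (which follows from $\top=\bigimeet\varnothing$ and the distributivity axiom for implication) and the fact that a meet indexed by $\Car{X}$ of $\top$'s is again $\top$. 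The resulting element $\top$ lies in $\S$ because $\mathbf{K}^{\A}\in\S$ and $\mathbf{K}^{\A}\ile\top$, so upward closure of the separator yields $\top\in\S$. For uniqueness, one only has to recall from \Sect~\ref{s:AsmA} that two morphisms in $\AsmA$ coincide iff they agree as set-theoretic functions, and there is a unique set-theoretic function from any set to $\{*\}$.

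There is really no obstacle here: the statement is a direct corollary of the adjunction $\Gamma\dashv\Delta$, and the direct verification is a one-line computation involving only the axioms of implicative structures and separators already recalled in Section~\ref{s:ImpAlg}.
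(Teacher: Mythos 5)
Your proposal is correct and its primary argument is exactly the paper's: the paper's proof is the one-line observation that $\Hom_{\AsmA}(X,\Delta{1})=\Hom_{\Set}(\Gamma{X},1)$ is a singleton, i.e.\ the adjunction $\Gamma\dashv\Delta$ of Proposition~\ref{p:AdjGammaDelta}. Your supplementary explicit verification of the tracking condition (via $a\to\top=\top$ and $\top\in\S$ by upward closure) is also sound, just not needed.
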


\begin{proof}
  Indeed, for each assembly~$X$, we observe that
  $$\Hom_{\AsmA}(X,\Delta{1})~=~
  \Hom_{\Set}(\Gamma{X},1)~=~\{(\_\in\Car{X}\mapsto{*})\}\,.
  \eqno\mbox{\qedhere}$$
\end{proof}

Now, given assemblies~$A$ and~$B$, we define the \emph{product assembly}
$A\times B$, letting:
\begin{itemize}
\item$\Car{A\times B}~:=~\Car{A}\times\Car{B}$;
\item$\Ex{A\times B}(a,b)~:=~\Ex{A}(a)\iand\Ex{B}(b)$\ \
  ($\in\S$)\quad for all $(a,b)\in\Car{A\times B}$.
\end{itemize}
We easily check that the set-theoretic projections
$\pi_1^{A,B}:\Car{A\times B}\to\Car{A}$ and
$\pi_2^{A,B}:\Car{A\times B}\to\Car{B}$
are actually morphisms of types $A\times B\to A$ and $A\times B\to B$
(respectively) in~$\AsmA$, since they are tracked by the
truth values $(\Lam{z}{z\,(\Lam{xy}{x})})^{\A},
(\Lam{z}{z\,(\Lam{xy}{y})})^{\A}\in\S$
(respectively).
\begin{proposition}[Binary product]\label{p:BinProd}
  For all assemblies $A$ and~$B$, the product assembly~$A\times B$
  equipped with the two morphisms $\pi_1^{A,B}:A\times B\to A$ and
  $\pi_2^{A,B}:A\times B\to B$ is the binary product of~$A$ and~$B$
  in the category~$\AsmA$.
\end{proposition}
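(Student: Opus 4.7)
The plan is to imitate the set-theoretic product. Given any cone in $\AsmA$ formed by morphisms $f : C \to A$ and $g : C \to B$ with trackers $s_f, s_g \in \S$, I set
\[ \langle f, g \rangle : \Car{C} \to \Car{A \times B}, \qquad c \mapsto (f(c), g(c)). \]
Since morphisms of $\AsmA$ are completely determined by their underlying set-theoretic functions, both the projection equalities $\pi_1^{A,B} \circ \langle f, g \rangle = f$ and $\pi_2^{A,B} \circ \langle f, g \rangle = g$, as well as the uniqueness of a mediating arrow, will follow immediately from the universal property of the Cartesian product in $\Set$. The only substantive point is to lift this set-theoretic pairing to a morphism of $\AsmA$ by exhibiting a tracker.

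For that, following the Curry--Howard recipe suggested by the second-order encoding $a \iand b = \bigimeet_{d \in \A}\bigl((a \to b \to d) \to d\bigr)$, I would propose
\[ s_{\langle f, g \rangle} ~:=~ \bigl(\Lam{x}{\Lam{k}{k\,(s_f\,x)\,(s_g\,x)}}\bigr)^{\A}, \]
which belongs to $\S$ by closure of separators under interpretation of $\lambda$-terms with parameters in $\S$. It then remains to check, for each $c \in \Car{C}$, the inequality $s_{\langle f, g \rangle} \ile \Ex{C}(c) \to \Ex{A \times B}(f(c), g(c))$. Unfolding $\Ex{A \times B}(f(c), g(c)) = \Ex{A}(f(c)) \iand \Ex{B}(g(c))$ together with the encoding of $\iand$ reduces this to a routine chain of applications of the adjunction $ab \ile c \liff a \ile b \to c$ and of the contravariance of $\to$ in its first argument, combined with the tracking hypotheses $s_f \ile \Ex{C}(c) \to \Ex{A}(f(c))$ and $s_g \ile \Ex{C}(c) \to \Ex{B}(g(c))$.

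The only obstacle is bookkeeping: one must keep the second-order encoding of $\iand$ and the directions of the adjunction straight throughout the tracking computation. Once this calculation is discharged, the universal property of the binary product transfers cleanly from $\Set$ through the faithful functor $\Gamma$, yielding the claimed product in $\AsmA$.
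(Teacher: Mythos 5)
Your proposal matches the paper's proof essentially verbatim: the mediating map is the set-theoretic pairing, uniqueness and the projection equalities are inherited from $\Set$ via faithfulness of $\Gamma$, and your tracker $(\Lam{x}{\Lam{k}{k\,(s_f\,x)\,(s_g\,x)}})^{\A}$ is exactly the paper's $(\Lam{xz}{z\,(s_f\,x)\,(s_g\,x)})^{\A}$. The remaining semantic-typing verification is left at the same level of detail in both, so there is nothing to add.
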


\begin{proof}
  Given an assembly~$X$ and morphisms $f:X\to A$ and $g:X\to B$,
  we want to show that there is a unique morphism $h:X\to A\times B$
  such that $\pi_1^{A,B}\circ h=f$ and $\pi_2^{A,B}\circ h=g$:
  $$\xymatrix@C=48pt@R=48pt{
    &X\ar[dl]_{\ds f}\ar[dr]^{\ds g}
    \ar@{-->}[d]^{\ds h}&\\
    A&A\times B\ar[l]^{\ds\pi_1^{A,B}}\ar[r]_{\ds\pi_2^{A,B}}&B \\
  }$$
  \smallbreak\noindent
  \emph{Existence of~$h$.}\quad
  Consider the map
  $h:\Car{X}\to\Car{A}\times\Car{B}$ defined by
  $h(x):=(f(x),g(x))$ for all $x\in\Car{X}$.
  Given trackers $s_f,s_g\in\S$ of the morphisms $f:X\to A$ and
  $g:X\to B$ (resp.), we observe that
  $$(\Lam{xz}{z\,(s_f\,x)\,(s_g\,x)})^{\A}~\ile~
  \bigimeet_{x\in X}\bigl(\Ex{X}(x)\to\Ex{A\times B}(h(x))\bigr)\,,$$
  which proves that~$h$ is a morphism of type $X\to A\times B$.
  And from the definition of~$h$, it is clear that
  $\pi_1^{A,B}\circ h=f$ and $\pi_2^{A,B}\circ h=g$.
  \smallbreak\noindent
  \emph{Uniqueness of~$h$.}\quad
  Obvious from the equalities $\pi_1^{A,B}\circ h=f$ and
  $\pi_2^{A,B}\circ h=g$.
\end{proof}

Finally, given parallel arrows $f,g:A\to B$ (in the category~$\AsmA$),
we define a new assembly $\Ker(f,g)$, letting:
\begin{itemize}
\item$\Car{\Ker(f,g)}~:=~\{a\in\Car{A}\mid f(a)=g(a)\}$;
\item$\Ex{\Ker(f,g)}(a)~:=~\Ex{A}(a)$\ \ ($\in\S$)\quad
  for all $a\in\Car{\Ker(f,g)}$.
\end{itemize}
We also consider the monomorphism $k:\Ker(f,g)\to A$ induced by the
inclusion $\Car{\Ker(f,g)}\subseteq\Car{A}$ and tracked by the
truth value $(\Lam{x}{x})^{\A}\in\S$.

\begin{proposition}[Equalizer]\label{p:Equalizer}
  For all parallel arrows $f,g:A\to B$ in~$\AsmA$, the assembly
  $\Ker(f,g)$ equipped with the arrow $k:\Ker(f,g)\to A$
  is the equalizer of~$f$ and~$g$ in~$\AsmA$.
\end{proposition}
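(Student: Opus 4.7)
The plan is to verify the two required properties of an equalizer: first that $k$ equalizes $f$ and $g$, and second the universal property, which itself splits into existence and uniqueness of the mediating morphism.

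For the first point, the equation $f \circ k = g \circ k$ holds by the very definition of $\Car{\Ker(f,g)}$, since every $a$ in this carrier satisfies $f(a) = g(a)$, and $k$ is the set-theoretic inclusion. So I would dispatch this in one line.

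For the universal property, I would fix an assembly $X$ together with a morphism $h : X \to A$ satisfying $f \circ h = g \circ h$, and seek a unique $\tilde h : X \to \Ker(f,g)$ with $k \circ \tilde h = h$. For existence, the hypothesis $f(h(x)) = g(h(x))$ ensures that $h$ factors set-theoretically through $\Car{\Ker(f,g)} \subseteq \Car{A}$, so I define $\tilde h : \Car{X} \to \Car{\Ker(f,g)}$ by $\tilde h(x) := h(x)$. The key observation that makes the tracking work effortlessly is that, by construction, $\Ex{\Ker(f,g)}(a) = \Ex{A}(a)$ for every $a \in \Car{\Ker(f,g)}$; therefore any tracker $s_h \in \S$ of $h$ (as a morphism $X \to A$) is also a tracker of $\tilde h$ (as a morphism $X \to \Ker(f,g)$), since
\[
s_h ~\ile~ \bigimeet_{x \in X} \bigl(\Ex{X}(x) \to \Ex{A}(h(x))\bigr) ~=~ \bigimeet_{x \in X} \bigl(\Ex{X}(x) \to \Ex{\Ker(f,g)}(\tilde h(x))\bigr).
\]
The equality $k \circ \tilde h = h$ then holds on the nose as a set-theoretic identity.

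For uniqueness, I would invoke Proposition~\ref{p:MonoEpi}: the morphism $k$ is mono in $\AsmA$ because it is injective as a set-theoretic map, so any two candidates $\tilde h_1, \tilde h_2 : X \to \Ker(f,g)$ satisfying $k \circ \tilde h_i = h$ must coincide. Alternatively and more directly, both would be equal set-theoretically to $x \mapsto h(x)$, hence equal as morphisms in $\AsmA$ by the remark in Section~\ref{s:AsmA}.

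There is really no serious obstacle here: the construction mirrors the $\Set$-level equalizer, and the design choice of letting $\Ex{\Ker(f,g)}$ inherit from $\Ex{A}$ is precisely what ensures that trackers transport trivially from $A$ to $\Ker(f,g)$. The only point worth emphasising in the write-up is this inheritance of the existence predicate, since that is what makes both the tracking of $k$ (already noted) and the tracking of $\tilde h$ automatic.
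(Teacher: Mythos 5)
Your proof is correct and follows essentially the same route as the paper: define $\tilde h(x):=h(x)$, reuse the tracker of $h$ (which works precisely because $\Ex{\Ker(f,g)}$ is inherited from $\Ex{A}$), and get uniqueness from the set-theoretic identity $k\circ\tilde h=h$. The only difference is that you spell out the tracker-transport argument and the equalizing equation $f\circ k=g\circ k$, which the paper leaves implicit.
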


\begin{proof}
  Given an assembly~$X$ and a morphism $h:X\to A$ such that
  $f\circ h=g\circ h$, we want to show that there is a unique
  morphism $h':X\to\Ker(f,g)$ such that $k\circ h'=h$:
  $$\xymatrix@C=40pt@R=40pt{
    X\ar@{-->}[d]_{\ds h'}\ar[dr]^{\ds h} \\
    \Ker(f,g)\ar[r]_{\ds k} &
    A\ar@<2pt>[r]^{\ds f}\ar@<-2pt>[r]_{\ds g} & B \\
  }$$
  \smallbreak\noindent
  \emph{Existence of~$h'$.}\quad
  Since $f\circ h=g\circ h$ (set-theoretically), we have
  $\Im(h)\subseteq\Car{\Ker(f,g)}$.
  Hence we can define $h':X\to\Ker(f,g)$ letting
  $h'(x):=h(x)$ for all $x\in\Car{X}$, and taking the same tracker
  as for $h:X\to A$.
  We obviously have $k\circ h'=h$ by construction.
  \smallbreak\noindent
  \emph{Uniqueness of~$h'$.}\quad
  Obvious from the equality $k\circ h'=h$.
\end{proof}

From Prop.~\ref{p:Terminal}, \ref{p:BinProd} and~\ref{p:Equalizer},
it is clear that:
\begin{proposition}[Finite limits]\label{p:FinLim}
  The category~$\AsmA$ has all finite limits.
\end{proposition}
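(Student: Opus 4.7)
The plan is to invoke the classical categorical fact that any category possessing a terminal object, binary products, and equalizers of parallel pairs admits all finite limits. Propositions~\ref{p:Terminal}, \ref{p:BinProd} and~\ref{p:Equalizer} have just supplied precisely these three ingredients for $\AsmA$, so the statement follows immediately from this standard result; no additional construction specific to assemblies is needed.

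If a self-contained argument were preferred, I would, given a finite diagram $D : \mathcal{J} \to \AsmA$ over a finite index category $\mathcal{J}$, first form the finite product $P := \prod_{j \in \mathrm{ob}\,\mathcal{J}} D(j)$ by iterating Prop.~\ref{p:BinProd}, with base case the terminal object $\mathbf{1}$ of Prop.~\ref{p:Terminal} for the empty product. I would then form analogously the finite product $Q := \prod_{u : j \to k \in \mathrm{mor}\,\mathcal{J}} D(k)$ indexed by the (finite) set of arrows of $\mathcal{J}$, and define two morphisms $\alpha, \beta : P \to Q$ whose components at an arrow $u : j \to k$ are respectively the projection $\pi_k$ and the composite $D(u)\circ \pi_j$. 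Prop.~\ref{p:Equalizer} then provides the equalizer $E \to P$ of $\alpha$ and $\beta$, and one checks in the usual way that $E$, equipped with the cone obtained by postcomposing $E \to P$ with the projections $\pi_j : P \to D(j)$, is the limit of $D$ in $\AsmA$.

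There is essentially no obstacle: the only subtlety to note is the finiteness of $\mathrm{ob}\,\mathcal{J}$ and $\mathrm{mor}\,\mathcal{J}$, which ensures that both products used in the construction are genuine finite products already established in $\AsmA$. All the substantive work, including the explicit description of the existence predicates on products and equalizers together with the trackers witnessing the required tracking conditions, has been carried out in the three preceding propositions.
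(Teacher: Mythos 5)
Your proof is correct and follows exactly the paper's route: the paper likewise deduces Proposition~\ref{p:FinLim} immediately from Propositions~\ref{p:Terminal}, \ref{p:BinProd} and~\ref{p:Equalizer} via the standard fact that a terminal object, binary products and equalizers suffice for all finite limits. Your optional self-contained product--equalizer construction is a correct elaboration of that same standard argument.
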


Moreover, it is clear from the above constructions in~$\AsmA$ (that
are based on the equifunctional constructions in~$\Set$)
that the forgetful functor $\Gamma:\AsmA\to\Set$ preserves the
terminal object, binary products (together with the corresponding
projections) and equalizers (together with the corresponding
inclusions).
Therefore:
\begin{proposition}[$\Gamma$ preserves all finite
    limits]\label{p:PreserveFinLim}
  The forgetful functor $\Gamma:\AsmA\to\Set$ preserves all finite
  limits in~$\AsmA$.
\end{proposition}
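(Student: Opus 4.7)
The plan is to reduce to the three basic cases already handled, then invoke the standard fact that any functor preserving the terminal object, binary products, and equalizers preserves all finite limits (since finite products can be built from binary products and the terminal object by induction, and an arbitrary finite limit can be obtained as an equalizer between two parallel maps of a finite product). Hence it suffices to verify preservation separately for each of the three cases constructed in Propositions~\ref{p:Terminal}, \ref{p:BinProd} and~\ref{p:Equalizer}; in each case the verification is just an unfolding of the definitions, since the constructions in $\AsmA$ were designed to agree with those in $\Set$ on carriers.

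For the terminal object, $\Gamma(\mathbf{1})=\Gamma(\Delta 1)=1$, which is terminal in $\Set$. For binary products, by definition $\Car{A\times B}=\Car A\times\Car B$, so $\Gamma(A\times B)=\Gamma A\times \Gamma B$; and the projections $\pi_1^{A,B},\pi_2^{A,B}$ in $\AsmA$ are, by definition, the set-theoretic projections themselves, so $\Gamma$ sends them to the canonical projections in $\Set$. For equalizers, the carrier of $\Ker(f,g)$ is by construction the set $\{a\in\Car A \mid f(a)=g(a)\}$, which is precisely the equalizer of $\Gamma f, \Gamma g$ in $\Set$, and the arrow $k$ is the inclusion, whose image under $\Gamma$ is the canonical equalizer inclusion in $\Set$.

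Having checked that $\Gamma$ preserves each of these three limits together with the associated universal cones, standard category theory yields that $\Gamma$ preserves all finite limits. Concretely, given a finite diagram $D:\mathcal{J}\to\AsmA$, its limit in $\AsmA$ can be described as the equalizer of two parallel maps between finite products (one built from the object part of $D$, the other reflecting the morphism part); applying $\Gamma$ to this construction and using the three preservation statements produces the analogous construction in $\Set$, which is exactly the limit of $\Gamma\circ D$.

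There is no real obstacle here: the proof is essentially a bookkeeping check, because the finite-limit constructions in $\AsmA$ were arranged in Section~\ref{ss:FinLim} to mirror $\Set$ verbatim at the level of carriers, with the existence predicates and trackers only ensuring that the universal maps land inside $\AsmA$ but never altering the underlying set-theoretic data. The only point worth stating carefully is the reduction of arbitrary finite limits to the three basic cases, which is a general categorical lemma that can be cited.
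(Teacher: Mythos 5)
Your proof is correct and follows essentially the same route as the paper: the paper likewise observes that the terminal object, binary products, and equalizers constructed in Section~\ref{ss:FinLim} are the set-theoretic ones on carriers, and then concludes by the standard reduction of arbitrary finite limits to these three cases. Your write-up merely makes explicit the general categorical lemma that the paper leaves implicit.
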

\subsection{Finite colimits}\label{ss:FinColim}
Let us now prove that $\AsmA$ has all finite colimits.
\begin{proposition}\label{p:Initial}
  The assembly $\mathbf{0}:=\Delta\varnothing$
  is the initial object of~$\AsmA$.
\end{proposition}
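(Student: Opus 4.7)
My plan is to verify directly that $\mathbf{0}=\Delta\varnothing$ satisfies the universal property of an initial object, mirroring the proof of Proposition~\ref{p:Terminal}. By construction of $\Delta$, the carrier $\Car{\mathbf{0}}$ is empty, so for every assembly $X$ there is exactly one set-theoretic function $f:\varnothing\to\Car{X}$, namely the empty function. Uniqueness of a morphism $\mathbf{0}\to X$ will therefore be automatic, and the only point that needs attention is to check that this empty function qualifies as a morphism of~$\AsmA$, i.e.\ satisfies the tracking condition.

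The tracking condition to verify reads
$$\bigimeet_{x\in\varnothing}\bigl(\Ex{\mathbf{0}}(x)\to\Ex{X}(f(x))\bigr)~\in~\S.$$
Since the indexing set is empty and~$\A$ is a complete lattice, this meet equals $\top=\max\A$, so the question reduces to showing that $\top\in\S$. This follows from the Hilbert axiom $\mathbf{K}^{\A}\in\S$ combined with the trivial inequality $\mathbf{K}^{\A}\ile\top$ and the upward closure of~$\S$. Equivalently, I can exhibit any element of~$\S$ as a tracker for the empty function---for instance $(\Lam{x}{x})^{\A}\in\S$---since the required inequality $s_f\ile\top$ holds vacuously on an empty index.

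No real obstacle is anticipated: the argument is pure bookkeeping. The only conceptual ingredients are the convention that the empty meet in a complete lattice is~$\top$ and the observation that~$\top$ lies in every separator.
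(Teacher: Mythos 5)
Your proof is correct and follows essentially the same route as the paper, which likewise observes that $\Hom_{\AsmA}(\Delta\varnothing,X)$ contains only the empty inclusion, tracked by $\top\in\S$. Your additional remarks (the empty meet is $\top$, and $\top\in\S$ by upward closure from $\mathbf{K}^{\A}$) merely spell out details the paper leaves implicit.
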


\begin{proof}
  Indeed, for each assembly~$X$, we observe that
  $$\Hom_{\AsmA}(\Delta\varnothing,X)~=~\{i_X\}\,,$$
  where $i_X$ is the inclusion of~$\varnothing$ into~$X$
  (tracked by $\top\in\S$).
\end{proof}

Now, given assemblies~$A$ and~$B$, we define the
\emph{direct sum assembly} $A+B$, letting:
\begin{itemize}
\item$\Car{A+B}~:=~\Car{A}\uplus\Car{B}
  ~=~(\{0\}\times\Car{A})\cup(\{1\}\times\Car{B})$;
\item$\Ex{A\times B}(0,a)~:=~(\Lam{xy}{x})^{\A}\iand\Ex{A}(a)$\ \
  ($\in\S$)\quad for all $a\in\Car{A}$;
\item$\Ex{A\times B}(1,b)~:=~(\Lam{xy}{y})^{\A}\iand\Ex{B}(b)$\ \
  ($\in\S$)\quad for all $b\in\Car{B}$.
\end{itemize}
We easily check that the coproduct injections 
$\sigma_1^{A,B}:\Car{A}\to\Car{A+B}$ and
$\sigma_2^{A,B}:\Car{B}\to\Car{A+B}$
are actually morphisms of types $A\to A+B$ and $B\to A+B$
(resp.)\ in~$\AsmA$, since they are tracked by the
truth values $(\Lam{x_0z}{z\,(\Lam{xy}{x})\,x_0})^{\A},
(\Lam{y_0z}{z\,(\Lam{xy}{y})\,y_0})^{\A}\in\S$
(resp.)
\begin{proposition}[Binary coproduct]\label{p:BinCoprod}
  For all assemblies $A$ and~$B$, the direct sum assembly~$A+B$
  equipped with the two morphisms $\sigma_1^{A,B}:A\to A+B$ and
  $\sigma_2^{A,B}:B\to A+B$ is the binary coproduct of~$A$ and~$B$
  in the category~$\AsmA$.
\end{proposition}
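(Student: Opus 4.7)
The plan is to mirror the proof of Proposition~\ref{p:BinProd}. Given an assembly $X$ and morphisms $f:A\to X$ and $g:B\to X$ tracked by $s_f,s_g\in\S$, I want to construct a unique morphism $h:A+B\to X$ satisfying $h\circ\sigma_1^{A,B}=f$ and $h\circ\sigma_2^{A,B}=g$.

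For existence, define $h:\Car{A+B}\to\Car{X}$ set-theoretically by case on the tag: $h(0,a):=f(a)$ and $h(1,b):=g(b)$. The two required triangles then commute by construction, so all that is left is to exhibit a tracker. My candidate is
$$s_h~:=~(\Lam{z}{z\,(\Lam{uv}{u\,(s_f\,v)\,(s_g\,v)})})^{\A}\,.$$
The design idea is that the tag sub-terms $(\Lam{xy}{x})^{\A}$ and $(\Lam{xy}{y})^{\A}$ sitting inside $\Ex{A+B}(0,a)$ and $\Ex{A+B}(1,b)$ already act as realizer-level selectors; applying the bound variable $u$ of $F:=\Lam{uv}{u\,(s_f\,v)\,(s_g\,v)}$ to the two candidates $s_f\,v$ and $s_g\,v$ picks the branch corresponding to the actual tag.

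For uniqueness, any morphism $h':A+B\to X$ making both triangles commute must satisfy $h'(0,a)=f(a)$ and $h'(1,b)=g(b)$, hence agrees with $h$ as a set-theoretic function on $\Car{A+B}$, and therefore as a morphism in $\AsmA$.

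The main computation is the verification of the tracking condition. I would fix $(k,v)\in\Car{A+B}$ and unfold $p\iand q=\bigimeet_{c\in\A}((p\to q\to c)\to c)$; specialising the bound $c$ to $\Ex{X}(h(k,v))$ reduces the goal to showing that $F$ is below $p\to q\to\Ex{X}(h(k,v))$ for the appropriate tag $p$. For $k=0$, the combinator inequality $u\,x\,y\ile x$ derived from $u\ile(\Lam{xy}{x})^{\A}$ gives $u\,(s_f\,v)\,(s_g\,v)\ile s_f\,v\ile\Ex{X}(f(a))$, using that $s_f$ tracks $f$ and $v\ile\Ex{A}(a)$; the symmetric inequality $u\,x\,y\ile y$ coming from $u\ile(\Lam{xy}{y})^{\A}$ handles $k=1$. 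Monotonicity of application and abstraction then packages these into the required inequality $s_h\ile\bigimeet_{(k,v)}(\Ex{A+B}(k,v)\to\Ex{X}(h(k,v)))$. The only delicate point is keeping the $\iand$-encoding straight, but the computation is essentially the standard Curry--Howard construction of the co-pairing combinator transposed to this encoding of sums.
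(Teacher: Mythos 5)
Your proposal matches the paper's proof essentially verbatim: the same case-defined map $h$, the identical tracker $(\Lam{z}{z\,(\Lam{uv}{u\,(s_f\,v)\,(s_g\,v)})})^{\A}$, and the same uniqueness argument from the commuting triangles. The extra detail you give on verifying the tracking inequality (unfolding the $\iand$-encoding and instantiating the bound variable at the tag combinators) is correct and simply fills in a step the paper leaves as an observation.
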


\begin{proof}
  Given an assembly~$X$ and morphisms $f:A\to X$ and $g:B\to X$,
  we want to show that there is a unique morphism $h:A+B\to X$
  such that $h\circ\sigma_1^{A,B}=f$ and $h\circ\sigma_2^{A,B}=g$:
  $$\xymatrix@C=48pt@R=48pt{
    A\ar[r]^{\ds\sigma_1^{A,B}}\ar[dr]_{\ds f}&
    A+B\ar@{-->}[d]^{\ds h}&
    B\ar[l]_{\ds\sigma_2^{A,B}}\ar[dl]^{\ds g}\\
    &X&\\
  }$$
  \smallbreak\noindent
  \emph{Existence of~$h$.}\quad
  Consider the map $h:\Car{A+B}\to\Car{X}$ defined by
  $h(0,a):=f(a)$ for all $a\in\Car{A}$ and $h(1,b)=g(b)$ for all
  $b\in\Car{B}$.
  Given trackers $s_f,s_g\in\S$ of the morphisms $f:A\to X$ and
  $g:B\to X$ (respectively), we observe that
  $$(\Lam{z}{z\,(\Lam{uv}{u\,(s_f\,v)\,(s_g\,v)})})^{\A}
  ~\ile~\bigimeet_{\mcent{c\in\Car{A+B}}}
  \bigl(\Ex{A+B}(c)\to\Ex{X}(h(c))\bigr)\,,$$
  which proves that~$h$ is a morphism of type $A+B\to X$.
  And from the definition of~$h$, it is clear that
  $h\circ\sigma_1^{A,B}=f$ and $h\circ\sigma_2^{A,B}=g$.
  \smallbreak\noindent
  \emph{Uniqueness of~$h$.}\quad
  Obvious from the equalities
  $h\circ\sigma_1^{A,B}=f$ and $h\circ\sigma_2^{A,B}=g$.
\end{proof}

Finally, given parallel arrows $f,g:A\to B$ (in the category~$\AsmA$),
we define a new assembly $\Coker(f,g)$, letting:
\begin{itemize}
\item$\Car{\Coker(f,g)}~:=~\Car{B}/{\sim}$,\quad
  where $\sim$ is the smallest equivalence relation on the set~$\Car{B}$
  such that $f(a)\sim g(a)$ for all $a\in\Car{A}$.
\item$\ds\Ex{\Coker(f,g)}(c)~:=~\bigexists_{b\in c}\Ex{B}(b)
  ~=~\bigimeet_{\mcent{v\in\A}}
  \biggl(\Bigl(\bigimeet_{b\in c}\bigl(\Ex{B}(b)\to v\bigr)\Bigr)
  \to v\biggr)$\quad for all $c\in\Car{\Coker(f,g)}$.
\item[] (Given an equivalence class
  $c\in\Car{B}/{\sim}=\Car{\Coker(f,g)}$,
  and writing $s_0:=\Ex{B}(b_0)$ for some $b_0\in c$, we observe that
  $(\Lam{z}{z\,s_0})^{\A}\ile\bigexists_{b\in c}\Ex{B}(b)$,
  which proves that $\bigexists_{b\in c}\Ex{B}(b)\in\S$.)

\end{itemize}
We also consider the epimorphism $k:B\to\Coker(f,g)$ induced by the
canonical surjection from $\Car{B}$ to $\Car{\Coker(f,g)}=\Car{B}/{\sim}$
and tracked by the truth value $(\Lam{xz}{zx})^{\A}$.

\begin{proposition}[Coequalizer]\label{p:Coequalizer}
  For all parallel arrows $f,g:A\to B$ in~$\AsmA$, the assembly
  $\Coker(f,g)$ equipped with the arrow $k:B\to\Coker(f,g)$
  is the coequalizer of~$f$ and~$g$ in~$\AsmA$.
\end{proposition}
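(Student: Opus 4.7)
The plan is to mimic the construction of coequalizers in $\Set$ and equip the set-theoretic quotient with a suitable tracker for the universal map. First, we observe that $k \circ f = k \circ g$ as morphisms in $\AsmA$: since $f(a) \sim g(a)$ holds by the very definition of $\sim$, the set-theoretic functions $k \circ f$ and $k \circ g$ agree, and morphism equality in $\AsmA$ is set-theoretic equality.

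For the universal property, suppose $h : B \to X$ satisfies $h \circ f = h \circ g$, with a tracker $s_h \in \S$. As in $\Set$, the equality $h \circ f = h \circ g$ implies that $h$ is constant on $\sim$-equivalence classes, so there is a unique set-theoretic function $h' : \Car{B}/{\sim} \to \Car{X}$ with $h'([b]) = h(b)$, and $h' \circ k = h$ holds by construction. The step that requires work is exhibiting a tracker of $h'$ as a morphism $\Coker(f,g) \to X$, i.e.\ an element of $\S$ below
\[
\bigimeet_{c \in \Car{\Coker(f,g)}} \bigl( \Ex{\Coker(f,g)}(c) \to \Ex{X}(h'(c)) \bigr) .
\]
Fix a class $c$. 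For every $b \in c$ we have $h'(c) = h(b)$, so $s_h \ile \Ex{B}(b) \to \Ex{X}(h(b)) = \Ex{B}(b) \to \Ex{X}(h'(c))$; taking the meet over $b \in c$ gives $s_h \ile \bigimeet_{b \in c}(\Ex{B}(b) \to \Ex{X}(h'(c)))$. On the other hand, by the second-order encoding of the existential quantifier,
\[
\Ex{\Coker(f,g)}(c) \;=\; \bigexists_{b \in c} \Ex{B}(b) \;\ile\; \Bigl( \bigimeet_{b \in c} ( \Ex{B}(b) \to \Ex{X}(h'(c)) ) \Bigr) \to \Ex{X}(h'(c)) .
\]
Combining these via the standard existential-elimination scheme, the term $(\Lam{y}{y\,s_h})^{\A}$ is a tracker: applied to a realizer of $\Ex{\Coker(f,g)}(c)$, feeding $s_h$ produces a realizer of $\Ex{X}(h'(c))$, uniformly in $c$.

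Uniqueness is immediate: since $k$ is surjective as a set-theoretic map, any two morphisms $h'_1, h'_2 : \Coker(f,g) \to X$ with $h'_1 \circ k = h'_2 \circ k$ must coincide on all of $\Car{B}/{\sim}$, and equality of morphisms in $\AsmA$ is equality of underlying functions. The main obstacle is the computation in the middle paragraph, which is not hard once one recognises that the existential quantifier encoding of $\Ex{\Coker(f,g)}$ is precisely what permits lifting the tracker of $h$ along the quotient.
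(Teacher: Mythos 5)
Your proposal is correct and follows essentially the same route as the paper: factor $h$ through the quotient set-theoretically and track the induced map $h'$ by $(\Lam{y}{y\,s_h})^{\A}$, exploiting the existential-quantifier encoding of $\Ex{\Coker(f,g)}$. Your middle paragraph merely makes explicit the semantic-typing computation that the paper leaves implicit.
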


\begin{proof}
  Given an assembly~$X$ and a morphism $h:B\to X$ such that
  $h\circ f=h\circ g$, we want to show that there is a unique
  morphism $h':\Coker(f,g)\to X$ such that $h'\circ k=h$:
  $$\xymatrix@C=40pt@R=40pt{
    A\ar@<2pt>[r]^{\ds f}\ar@<-2pt>[r]_{\ds g} &
    B\ar[r]^<<<<<<<{\ds k}\ar[dr]_{\ds h} &
    \Coker(f,g)\ar@{-->}[d]^{\ds h'} \\
    && X
  }$$
  \smallbreak\noindent
  \emph{Existence of~$h'$.}\quad
  Since $h\circ f=h\circ g$, we have $h(b)=h(b')$ for all $b,b'\in B$
  such that $b\sim b'$ (from the definition of the equivalence
  relation~$\sim$).
  Therefore, the map $h:\Car{B}\to\Car{X}$ factors through the quotient
  $\Car{B}/{\sim}$ into a map $h':\Car{B}/{\sim}\to\Car{X}$ such that
  $h'\circ k=h$.
  Given a tracker $s_h\in\S$ of the morphism $h:B\to X$, we then
  observe that
  $$(\Lam{z}{z\,s_h})^{\A}
  ~\ile~\bigimeet_{\mcent{c\in\Car{B}/{\sim}}}
  \bigl(\Ex{\Coker(f,g)}(c)\to\Ex{X}(h'(c))\bigr)\,,$$
  which proves that~$h'$ is a morphism of type $\Coker(f,g)\to X$.
  \smallbreak\noindent
  \emph{Uniqueness of~$h'$.}\quad
  Obvious from the equality $h'\circ k=h$.
\end{proof}

From Prop.~\ref{p:Initial}, \ref{p:BinCoprod} and~\ref{p:Coequalizer},
it is clear that:
\begin{proposition}[Finite colimits]\label{p:FinColim}
  The category~$\AsmA$ has all finite colimits.
\end{proposition}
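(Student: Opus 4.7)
The plan is to invoke the standard categorical fact that a category admits all finite colimits if and only if it admits an initial object, all binary coproducts, and coequalizers of all parallel pairs; the three ingredients have just been established as Propositions~\ref{p:Initial}, \ref{p:BinCoprod} and~\ref{p:Coequalizer} respectively. So the proof is essentially one sentence: combine these three results.

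More concretely, I would first recall (or refer the reader to a standard text) the fact that finite colimits can be constructed from finite coproducts and coequalizers together with the initial object handling the empty diagram. From a finite diagram $D : J \to \AsmA$, the colimit is built as the coequalizer of two morphisms between coproducts indexed respectively by the objects and morphisms of~$J$; the required coproducts are finite since~$J$ is, hence exist by iterating Proposition~\ref{p:BinCoprod} starting from~$\mathbf{0}$, and the coequalizer exists by Proposition~\ref{p:Coequalizer}. For the empty diagram the colimit is $\mathbf{0}$, given by Proposition~\ref{p:Initial}.

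There is no real obstacle here: all the content was done in the preceding propositions, and what remains is to cite the well-known reduction of finite colimits to coequalizers and finite coproducts. One might optionally remark that the analogous result for limits was stated as Proposition~\ref{p:FinLim}, and that the present proof is entirely dual.
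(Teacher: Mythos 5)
Your proposal matches the paper exactly: the paper derives Proposition~\ref{p:FinColim} immediately from Propositions~\ref{p:Initial}, \ref{p:BinCoprod} and~\ref{p:Coequalizer}, relying on the same standard reduction of finite colimits to the initial object, binary coproducts and coequalizers. Your argument is correct and there is nothing to add.
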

\subsection{LCCC structure}\label{ss:LCCC}
\subsubsection{LCCC's}
We first recall some basic facts about \emph{locally cartesian closed categories}, commonly known as \emph{LCCC's}.

\begin{definition}
  Let $\mathbb{B}$ be a category.
  \begin{enumeratenumeric}
    \item Fix an object $X \in \mathbb{B}$. The {\tmem{slice category
    $\slice{\mathbb{B}}{X}$}} {\tmem{of $\mathbb{B}$ over $X$}} (or simply
    {\tmem{slice}} when everything is understood) has morphisms of
    $\mathbb{B}$ with codomain $X$ as objects. A morphism $$u : \arobj{s}{V}{X}
    \rightarrow \arobj{t}{W}{X}$$ is a commuting triangle
$$
\xymatrix @-.5pc {
V \ar[rr]^u \ar[dr]_<(.3)s & & W \ar[dl]^<(.3)t \\
& X &
}
$$

    Composition is given by horizontal pasting of such triangles.

    \item The \tmem{arrow category}
    $\arcat{\mathbb{B}}$
    of $\mathbb{B}$ has
    morphisms of $\mathbb{B}$ as objects.
    A morphism $$(g, f) : \arobj{t}{W}{X} \rightarrow \arobj{p}{A}{Y}$$ in $\arcat{\mathbb{B}}$ is a commuting square
    $$
    \xymatrix @-.1pc {
    W \ar[r]^g \ar[d]_t & A \ar[d]^p \\
    X \ar[r]_f & Y
    }
    $$

    in $\mathbb{B}$. Composition is given by horizontal pasting of
    such squares.
  \end{enumeratenumeric}
\end{definition}

\begin{remark}
  Let $\mathbb{B}$ be a category.
  \begin{enumeratenumeric}
    \item There is the obvious functor $\tmop{cod} : \arcat{\mathbb{B}}
    \rightarrow \mathbb{B}$ sending an object $\smallarobj{t}{W}{X}$ on it's
    codomain $X$ and a morphism $$(g, f) : \arobj{t}{W}{X} \rightarrow
    \arobj{p}{A}{Y}$$ on it's {\tmem{codomain morphism}} $f$.

    \item If $\mathbb{B}$ has pullbacks, $\tmop{cod}$ is a Grothendieck
    fibration with fibers the slice categories
    $\slice{\mathbb{B}}{X}$ for all $X \in \mathbb{C}$. Reindexing an object
    $\smallarobj{p}{A}{Y} \in \slice{\mathbb{B}}{Y}$ along a morphism $f : X
    \rightarrow Y$ in the base $\mathbb{B}$ is given by pullback
    $$
    \xymatrix{
        p^*A \ar[r] \ar[d]_{f^*p} \pullbackcorner & A \ar[d]^p \\
        X \ar[r]_f & Y
    }
    $$

    A cleavage corresponds to a choice of pullbacks.
  \end{enumeratenumeric}
\end{remark}

\begin{definition}
  Let $\mathbb{B}$ be a category with finite limits.
  \begin{enumeratenumeric}
    \item $\mathbb{B}$ is cartesian closed (or simply CCC) if for all objects
    $A \in \mathbb{B}$ the functor $(-) \times A$ has a right adjoint
    $A^{(-)}$. This right adjoint is commonly known as {\tmem{exponentiation}}.

    \item $\mathbb{B}$ with finite limits is locally cartesian closed (or
    simply LCCC) if each slice $\mathbb{B}/ X$ is a CCC and reindextion
    functors in $\tmop{cod} : \mathbb{B}^{\rightarrow} \rightarrow \mathbb{B}$
    are cartesian closed (preserve binary products and exponentiation).
  \end{enumeratenumeric}
\end{definition}

\begin{theorem}
  \label{th:lccc}\tmtextbf{}Let $\mathbb{B}$ be a category with finite limits.
  The following are equivalent
  \begin{enumerateroman}
    \item $\mathbb{B}$ is an LCCC;

    \item The functor $f^{\ast} : \mathbb{B}/ Y \rightarrow \mathbb{B}/ X$ has
    a right adjoint $\depr{f} : \mathbb{B}/ X \rightarrow \mathbb{B}/ Y$
    for all morphisms $f : X \rightarrow Y$.
  \end{enumerateroman}
\end{theorem}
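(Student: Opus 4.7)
The plan is to treat each direction separately, with (ii) $\limp$ (i) following from adjoint composition together with Beck-Chevalley, while (i) $\limp$ (ii) rests on the canonical slice-of-slice equivalence $\slice{(\slice{\mathbb{B}}{Y})}{\bar{f}} \simeq \slice{\mathbb{B}}{X}$.

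For (ii) $\limp$ (i), each slice $\slice{\mathbb{B}}{X}$ inherits finite limits from $\mathbb{B}$. To produce the CCC structure, I fix $p : A \to X$ as an object of $\slice{\mathbb{B}}{X}$ and factor the binary product functor $(-) \times_{\slice{\mathbb{B}}{X}} p$ as $\Sigma_p \circ p^{\ast}$, where $\Sigma_p : \slice{\mathbb{B}}{A} \to \slice{\mathbb{B}}{X}$ is post-composition with $p$ and obeys the standard adjunction $\Sigma_p \dashv p^{\ast}$; combined with the hypothesised $p^{\ast} \dashv \depr{p}$, this yields the right adjoint $\depr{p} \circ p^{\ast}$ to $(-) \times_{\slice{\mathbb{B}}{X}} p$, so the exponential in $\slice{\mathbb{B}}{X}$ is $q^{p} := \depr{p}(p^{\ast} q)$. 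For the reindexing $f^{\ast}$ to be cartesian closed, preservation of products is immediate since $f^{\ast}$ is itself a right adjoint by (ii), while preservation of exponentials---via the explicit formula---reduces to the Beck-Chevalley iso $f^{\ast} \depr{q} \cong \depr{\pi_X} \pi_{A}^{\ast}$, obtained by taking right adjoints in the familiar pullback-square identity $\Sigma_{\pi_A} \pi_{X}^{\ast} \cong q^{\ast} \Sigma_f$.

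For (i) $\limp$ (ii), writing $\bar{f}$ for $f : X \to Y$ viewed as an object of $\slice{\mathbb{B}}{Y}$, the canonical equivalence $\slice{(\slice{\mathbb{B}}{Y})}{\bar{f}} \simeq \slice{\mathbb{B}}{X}$ identifies $f^{\ast}$ with the functor $\bar{f}^{\ast} : \slice{\mathbb{B}}{Y} \to \slice{(\slice{\mathbb{B}}{Y})}{\bar{f}}$ given by pulling back in $\slice{\mathbb{B}}{Y}$ along the morphism $\bar{f} \to \id_Y$---equivalently, by multiplying with $\bar{f}$ and retaining the projection to $\bar{f}$. Using that $\slice{\mathbb{B}}{Y}$ is CCC by (i), I have the exponential $E^{\bar{f}}$ together with the name-of-identity map $\eta : \id_Y \to \bar{f}^{\bar{f}}$. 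Given $s : E \to \bar{f}$ in $\slice{(\slice{\mathbb{B}}{Y})}{\bar{f}}$, I define $\depr{\bar{f}} s$ by the pullback in $\slice{\mathbb{B}}{Y}$
\[
\xymatrix@R=2.3pc@C=3pc{
\depr{\bar{f}} s \pullbackcorner \ar[r] \ar[d] & E^{\bar{f}} \ar[d]^{s_{\ast}} \\
\id_Y \ar[r]_{\eta} & \bar{f}^{\bar{f}}
}
\]
where $s_{\ast}$ denotes post-composition with $s$; intuitively, $\depr{\bar{f}} s$ is the object of sections of $s$.

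The main technical obstacle is the verification that $\bar{f}^{\ast} \dashv \depr{\bar{f}}$. The key calculation is that a morphism $g : h \times \bar{f} \to E$ in $\slice{\mathbb{B}}{Y}$ satisfies the compatibility $s \circ g = \pi_{\bar{f}}$ defining a $\slice{(\slice{\mathbb{B}}{Y})}{\bar{f}}$-morphism $\bar{f}^{\ast} h \to s$ if and only if its exponential transpose $\tilde{g} : h \to E^{\bar{f}}$ satisfies $s_{\ast} \circ \tilde{g} = \eta \circ {!}_h$, for which I must establish the auxiliary identity $\widetilde{\pi_{\bar{f}}} = \eta \circ {!}_h$ relating the transpose of the projection to the name of the identity. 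By the universal property of the pullback such $\tilde{g}$ then correspond bijectively to morphisms $h \to \depr{\bar{f}} s$, yielding the adjunction; transporting $\depr{\bar{f}}$ along the equivalence $\slice{(\slice{\mathbb{B}}{Y})}{\bar{f}} \simeq \slice{\mathbb{B}}{X}$ finally produces the required $\depr{f}$.
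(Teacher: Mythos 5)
The paper offers no proof of Theorem~\ref{th:lccc}: it is declared ``part of the lore'' and delegated to the cited references, so there is no in-paper argument to measure yours against. Your proof supplies the standard textbook argument, and it is essentially correct. For (ii)~$\limp$~(i), the factorisation $(-)\times p\cong\Sigma_p\circ p^{\ast}$ and the resulting exponential $q^{p}=\depr{p}(p^{\ast}q)$ are right, and the Beck--Chevalley isomorphism does follow, by uniqueness of right adjoints, from the pullback-pasting isomorphism between the corresponding composites of left adjoints, so the reduction of ``$f^{\ast}$ preserves exponentials'' to Beck--Chevalley goes through. One attribution slip: $f^{\ast}$ preserves binary products in the slice because it is the \emph{right} adjoint in the always-available adjunction $\Sigma_f\dashv f^{\ast}$, not ``by (ii)'' --- hypothesis (ii) only makes $f^{\ast}$ a \emph{left} adjoint, which would not by itself give preservation of products; the fact you use is true, but the stated reason is the wrong one. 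For (i)~$\limp$~(ii), the object-of-sections construction (pulling back $s_{\ast}:E^{\bar f}\to\bar f^{\bar f}$ along the name of the identity) together with the transposition identity $\widetilde{\pi_{\bar f}}=\eta\circ{!}_h$ is exactly the classical argument, and transporting the adjunction along the equivalence $\slice{(\slice{\mathbb{B}}{Y})}{\bar f}\simeq\slice{\mathbb{B}}{X}$ is legitimate precisely because that equivalence identifies $f^{\ast}$ with $\bar f^{\ast}$, as you note. In short: where the paper cites, you prove, and the proof is the expected one; only the minor misstatement about why $f^{\ast}$ preserves products needs correcting.
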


Theorem \ref{th:lccc} (perhaps not the most general form of the statement) is part of the lore, see for instance \cite{freyd1972aspects}, \cite{seely1984locally} or \cite{awodey2010category}.

\subsubsection{Sets}

Assume a set-theoretical function $f: X \to Y$.
We shall now carry out the proof that the functor
$f^{\ast} :\slice{\ens}{Y} \to \slice{\ens}{X} $ has
a right adjoint. This obviously implies that $\ens$ is an LCCC by virtue of Theorem \ref{th:lccc}, which is largely irrelevant in itself since it is a well-known fact (true for any topos as it happens). The reason for the burden is that we shall need the constructions subsequently, in order to prove that $\AsmA$ is an LCCC. We do not claim any originality here, yet were unable to locate a precise source for the technique we are using.

\begin{definition}
  Let $f : X \rightarrow Y$ be a map and $y \in Y$ an element we shall call
  {\tmem{basepoint}}. An
  {\tmem{$f$-section}} of an object $\smallarobj{t}{W}{X} \in \slice{\ens}{X}$ {\tmem{over the basepoint $y \in Y$}} is a map
  $s : f^{- 1} (y) \rightarrow W$ such that $t \circ s = \tmop{id}$.
\end{definition}

Given $f : X \rightarrow Y$ and a basepoint $y \in Y$, an $f$-section $s$ of
$\smallarobj{t}{W}{X}$ makes the diagram
$$
\xymatrix{
& W \ar[d]^t \\
f^{-1}(y) \ar[ur]^s \ar@{^{(}->}[r] & X}
$$

commute. An $f$-section of $t$ over some basepoint does not need to
be defined on {\tmem{all}} $X$, it is a {\tmem{partial map}} from $X$ to $W$.

\begin{notation*}
  Given a map $f : X \rightarrow Y$ and an object $\smallarobj{t}{W}{X} \in
  \slice{\ens}{X}$, we shall write
  \begin{eqnarray*}
    \ppsect{f}{t} & \assign & \left\{ (y, s) \:|\: y \in Y, s \in \ens (f^{- 1}
    (y), W), t \circ s = \tmop{id} \right\}
  \end{eqnarray*}
  for the set of all $f$-sections of $\smallarobj{t}{W}{X}$ taken over all
  basepoints and $\tmop{bp} : \ppsect{f}{t} \rightarrow Y$ for the first
  projection.
\end{notation*}

The fancy naming of the above first projection suggests that the latter sends
a partial $f$-section on it's basepoint.

\begin{proposition}
  \label{prop:fun}The map of classes
  \begin{eqnarray*}
    \depr{f} : \left( \slice{\ens}{X} \right)_0 & \longrightarrow & \left(
    \slice{\ens}{Y} \right)_0\\
    \arobj{t}{W}{X} & \mapsto &  \arobj{{\tmop{bp}}}{{\ppsect{f}{t}}}{Y}
  \end{eqnarray*}
  extends to a functor $\depr{f} : \slice{\ens}{X} \rightarrow
  \slice{\ens}{Y}$ with action on morphisms given by
  \begin{eqnarray*}
    \depr{f} (u) : \arobj{{\tmop{bp}}} {{\ppsect{f}{t}}}{Y} & \longrightarrow &
    \arobj{{\tmop{bp}}}{{\ppsect{f}{t'}}}{Y} \\
    (y, s) & \mapsto & (y, u \circ s)
  \end{eqnarray*}
\end{proposition}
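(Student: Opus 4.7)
The plan is to verify in turn the three things the statement asserts: that for each $\smallarobj{t}{W}{X}$ the purported image is a legitimate object of $\slice{\ens}{Y}$; that for each morphism $u$ of $\slice{\ens}{X}$ the prescribed rule yields a morphism of $\slice{\ens}{Y}$; and that this assignment preserves identities and composition. No step presents a genuine obstacle -- the whole argument is just an unwinding of the definition of $\ppsect{f}{t}$, and the mild subtlety is only bookkeeping of the fibers $f^{-1}(y)$.

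First, I would observe that $\ppsect{f}{t}$ is a well-defined set, being carved out of $Y \times \bigcup_{y \in Y} \ens(f^{-1}(y), W)$ by the equational condition $t \circ s = \id_{f^{-1}(y)}$, so that $\smallarobj{\tmop{bp}}{\ppsect{f}{t}}{Y}$ is indeed an object of $\slice{\ens}{Y}$. Second, given $u : \smallarobj{t}{W}{X} \to \smallarobj{t'}{W'}{X}$ (i.e.\ $t' \circ u = t$) and $(y,s) \in \ppsect{f}{t}$, I would verify that $u \circ s$ is again an $f$-section of $t'$ over $y$ by the computation $t' \circ (u \circ s) = (t' \circ u) \circ s = t \circ s = \id_{f^{-1}(y)}$, hence $(y, u \circ s) \in \ppsect{f}{t'}$. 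The resulting function commutes with the two basepoint projections on the nose, so it is a morphism of $\slice{\ens}{Y}$.

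Finally, functoriality is immediate from associativity and unitality of composition in $\ens$: one has $\depr{f}(\id_W)(y,s) = (y, \id_W \circ s) = (y,s)$, and $\depr{f}(v \circ u)(y,s) = (y, (v \circ u) \circ s) = (y, v \circ (u \circ s)) = \depr{f}(v)(\depr{f}(u)(y,s))$. The genuine mathematical content does not lie in this proposition but in the result to follow, which must show that $\depr{f}$ is right adjoint to the pullback functor $f^{\ast}$; the construction of unit and counit and the verification of the triangle identities is where any real difficulty will appear. The present proposition is purely preparatory, ensuring that the candidate right adjoint is at least a well-defined functor.
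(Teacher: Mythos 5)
Your proposal is correct and follows essentially the same route as the paper's proof: the key step in both is checking that post-composition with $u$ sends an $f$-section of $t$ over $y$ to an $f$-section of $t'$ over the same basepoint (you do it equationally, the paper via the corresponding commuting diagram), after which functoriality is routine. Your version is in fact slightly more explicit, since the paper leaves the identity and composition checks as ``immediate.''
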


\begin{proof}
  Let $$u : \arobj{t}{W}{X} \rightarrow \arobj{{t'}}{{W'}}{X}$$ be a morphism in
  $\slice{\ens}{X}$ and $s : f^{- 1} (y) \rightarrow W$ be an $f$-section of
  $t$ over the basepoint $y \in Y$. The map $u \circ s : f^{- 1} (y)
  \rightarrow W'$ is an $f$-section of $t'$ above the same basepoint $y \in Y$
  since the diagram
$$
\xymatrix{
& W \ar[rr]^u \ar[dr]_t & & W' \ar[dl]^{t'} \\
f^{-1}(y) \ar@{^{(}->}[rr] \ar[ur]^s & & X &
}
$$

  commutes. We thus have a map
  \begin{eqnarray*}
    \depr{f} (u) : \arobj{{\tmop{bp}}}{{\ppsect{f}{t}}}{Y} & \longrightarrow &
    \arobj{{\tmop{bp}}}{{\ppsect{f}{t'}}}{Y} \\
    (y, s) & \mapsto & (y, u \circ s)
  \end{eqnarray*}
  It is immediate that the above are the data of a functor $\depr{f} :
  \slice{\ens}{X} \rightarrow \slice{\ens}{Y}$.
\end{proof}

\begin{lemma}
  \label{lem:part-sec}Assume $f : X \rightarrow Y$, a basepoint $y \in Y$ and
  $v : f^{- 1} (y) \rightarrow p^{- 1} (y)$. The canonical map
  \begin{eqnarray*}
    \langle \tmop{id}, v \rangle : f^{- 1} (y) & \longrightarrow &
    \pfib{X}{Y}{A}
  \end{eqnarray*}
  is an $f$-section of $p_1 : \pfib{X}{Y}{A} \rightarrow X$ over $y$.
\end{lemma}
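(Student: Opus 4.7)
The plan is to unpack the pullback universal property and check that the advertised map genuinely lands in $\pfib{X}{Y}{A}$, then verify the section equation $p_1\circ\langle\id,v\rangle=\id$ (where, following the convention introduced in the definition of $f$-section, $\id$ denotes the inclusion $f^{-1}(y)\hookrightarrow X$).

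First I would recall the pullback
\[
\xymatrix{
\pfib{X}{Y}{A}\ar[r]^-{p_2}\ar[d]_{p_1}\pullbackcorner & A\ar[d]^{p}\\
X\ar[r]_{f} & Y
}
\]
so that $\pfib{X}{Y}{A}=\{(x,a)\in X\times A\mid f(x)=p(a)\}$. Given any $x\in f^{-1}(y)$, we have $f(x)=y$ by definition of the fibre, and $p(v(x))=y$ because $v(x)\in p^{-1}(y)$; hence $f(x)=p(v(x))$, so the pair $(x,v(x))$ indeed lies in the pullback. This shows that $\langle\id,v\rangle\colon f^{-1}(y)\to\pfib{X}{Y}{A}$, $x\mapsto(x,v(x))$, is a well-defined set-theoretic map.

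Next I would verify the section condition. By construction, $p_1\circ\langle\id,v\rangle(x)=x$ for every $x\in f^{-1}(y)$, which is precisely the inclusion $f^{-1}(y)\hookrightarrow X$. Thus $\langle\id,v\rangle$ is an $f$-section of $p_1$ over the basepoint $y$ in the sense of the earlier definition.

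There is no real obstacle here: the statement is essentially the universal property of the pullback applied to the pair of maps $f^{-1}(y)\hookrightarrow X$ and $v\colon f^{-1}(y)\to p^{-1}(y)\subseteq A$, which agree after composition with $f$ and $p$ respectively (both equal the constant map to $y$). The only point worth flagging in the write-up is the mild abuse of notation whereby $\id$ in $\langle\id,v\rangle$ denotes the inclusion $f^{-1}(y)\hookrightarrow X$ rather than the identity on $f^{-1}(y)$; making this explicit keeps the section equation $p_1\circ\langle\id,v\rangle=\id$ unambiguous.
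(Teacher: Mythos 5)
Your proof is correct and follows essentially the same route as the paper: both verify that $f\circ\id$ and $p\circ v$ agree (as the constant map to $y$) on the fibre $f^{-1}(y)$ and then invoke the pullback to obtain $\langle\id,v\rangle$ with $p_1\circ\langle\id,v\rangle$ equal to the inclusion. Your explicit description of the pullback as a set of pairs and the remark about the overloading of $\id$ are harmless elaborations of the same argument.
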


\begin{proof}
  Assume $x \in f^{- 1} (y)$. We have
  \begin{eqnarray*}
    (f \circ \tmop{id}) (x) & = & y\\
    & = & (p \circ v) (x)
  \end{eqnarray*}
  hence the outer diagram in
$$
\xymatrix{
f^{-1}(y) \ar@{=}[dd] \ar[dr]^{\langle id,v \rangle} \ar[rr]^v && p^{-1}(y) \ar@{^{(}->}[d]\\
& X \times_Y A \ar[d]_{p_1} \ar[r]^{p_2} \pullbackcorner & A \ar[d]^p \\
f^{-1}(y) \ar@{^{(}->}[r] & X \ar[r]_f & Y
}
$$
  commutes, so $\langle \tmop{id}, v \rangle$ is the canonical map.
\end{proof}

\begin{proposition}\label{prop:adj-set}
  \label{prop:adj}The functor $\depr{f} : \slice{\ens}{X} \rightarrow
  \slice{\ens}{Y}$ is right adjoint to the reindexation functor $f^{\ast} :
  \slice{\ens}{Y} \rightarrow \slice{\ens}{X}$.
\end{proposition}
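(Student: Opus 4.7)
The plan is to establish the adjunction by exhibiting a natural bijection
\[ \Phi_{p,t} : \Hom_{\slice{\ens}{X}}(f^*p,\, t) \;\longrightarrow\; \Hom_{\slice{\ens}{Y}}(p,\, \depr{f} t) \]
for every $p : A \to Y$ in $\slice{\ens}{Y}$ and every $t : W \to X$ in $\slice{\ens}{X}$. I prefer this direct hom-set route to chasing a unit and counit, because the definition of $\ppsect{f}{t}$ already packages the data of an $f$-section indexed by its basepoint, which is exactly what the transpose is going to produce.

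First I unpack the two hom-sets. A morphism on the left is a set map $g : \pfib{X}{Y}{A} \to W$ such that $t \circ g = p_1$; a morphism on the right is a map $h : A \to \ppsect{f}{t}$ whose postcomposition with $\tmop{bp}$ equals $p$, which forces $h$ to have the form $a \mapsto (p(a), s_a)$ for some $f$-section $s_a : f^{-1}(p(a)) \to W$ of $t$.

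Then I define $\Phi_{p,t}(g)(a) := (p(a),\; x \mapsto g(x,a))$. The crucial observation is that $(x,a) \in \pfib{X}{Y}{A}$ exactly when $f(x) = p(a)$, so the partial evaluation $x \mapsto g(x,a)$ really has domain $f^{-1}(p(a))$; and the slice equation $t \circ g = p_1$ immediately gives $t(g(x,a)) = x$, so this partial evaluation is an $f$-section of $t$ over the basepoint $p(a)$. For the inverse, given $h$ with $h(a) = (p(a), s_a)$, I set $\Psi_{p,t}(h)(x,a) := s_a(x)$, which is well-defined by the same pullback reasoning. A routine unwinding shows $\Phi$ and $\Psi$ are mutually inverse set-theoretic bijections.

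What remains is naturality. Naturality in $t$, given $u : t \to t'$ in $\slice{\ens}{X}$, is unwound directly from the action of $\depr{f}$ on morphisms spelled out in Proposition~\ref{prop:fun}, since $\Phi_{p,t'}(u \circ g)(a) = (p(a), u \circ s_a)$ matches $\depr{f}(u)(\Phi_{p,t}(g)(a))$. Naturality in $p$, given $k : p' \to p$ in $\slice{\ens}{Y}$, is the delicate bookkeeping step: because $f^*$ is defined by pullback, one has to check that precomposition by the induced map $f^*k : \pfib{X}{Y}{A'} \to \pfib{X}{Y}{A}$ on the left corresponds under $\Phi$ to precomposition by $k$ on the right. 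I expect this to be the main obstacle, and Lemma~\ref{lem:part-sec} is exactly the tool that makes it transparent, since it identifies the canonical mediating map $\langle \id, v \rangle$ that carries a fibrewise map into an $f$-section of the pullback projection; applying it pointwise lets me re-express the action of $f^*k$ fibre-by-fibre in the language of $f$-sections, after which the naturality square commutes on the nose.
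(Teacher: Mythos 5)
Your proof is correct, and it reaches the adjunction by the hom-set route where the paper goes through a universal arrow: the paper constructs the unit $\eta_p : \smallarobj{p}{A}{Y} \to \depr{f}(f^{\ast}\smallarobj{p}{A}{Y})$, $a \mapsto (p(a), \langle\id, k_a\rangle)$, and shows every $w : p \to \depr{f}q$ factors uniquely as $\depr{f}(\bar w)\circ\eta_p$ with $\bar w(x,a) := s_{w,a}(x)$. The core combinatorial content is identical in the two arguments --- your $\Psi_{p,t}$ \emph{is} the paper's $\bar w$, and your $\Phi_{p,t}$ applied to the identity on $f^{\ast}p$ recovers the paper's $\eta_p$, both resting on the same observation that $(x,a)\in\pfib{X}{Y}{A}$ exactly when $x\in f^{-1}(p(a))$ and that $t\circ g = p_1$ turns $x\mapsto g(x,a)$ into an $f$-section. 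What differs is the bookkeeping each formulation demands: the universal-arrow version lets the paper skip naturality entirely (it is automatic once the universal property holds for every $p$), at the price of an explicit uniqueness check for $\depr{f}(\bar w)$; your version trades that for explicit naturality in both variables. On that last point you overestimate the difficulty of naturality in $p$: for $k : p' \to p$ the square commutes by direct evaluation, since $\Phi_{p',t}(g\circ f^{\ast}k)(a') = (p'(a'),\, x\mapsto g(x,k(a')))$ and $\bigl(\Phi_{p,t}(g)\circ k\bigr)(a') = (p(k(a')),\, x\mapsto g(x,k(a')))$ agree because $p\circ k = p'$ (so in particular the domains $f^{-1}(p'(a'))$ and $f^{-1}(p(k(a')))$ coincide); Lemma~\ref{lem:part-sec} is not needed there, and invoking it obscures a computation that is immediate. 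Neither of these is a gap --- your argument goes through.
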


\begin{proof}
  Assume $\smallarobj{p}{A}{Y} \in \slice{\ens}{Y}$ and $a \in A$. Let
  \begin{eqnarray*}
    k_a : f^{- 1} (p (a)) & \longrightarrow & p^{- 1} (p (a))\\
    x & \mapsto & a
  \end{eqnarray*}
  be the constant map. The map $\langle \tmop{id}, k_a \rangle : f^{- 1} (p
  (a)) \rightarrow \pfib{X}{Y}{A}$ is an $f$-section over $p (a)$ (c.f. Lemma
  \ref{lem:part-sec}) so the map
  \begin{eqnarray*}
    \eta_p : A & \longrightarrow & \ppsect{f}{p}\\
    a & \mapsto & (p (a), \langle \tmop{id}, k_a \rangle)
  \end{eqnarray*}
  is well-defined. Moreover, it is (trivially) a morphism $$\eta_p :
  \arobj{p}{A}{Y} \rightarrow \arobj{{\tmop{bp}}}{{\ppsect{f}{p}}}{Y}$$ in
  {\slice{{\ens}}{Y}}. Assume
  $\smallarobj{p}{A}{Y}
  \in \slice{\ens}{Y}$.
  We have
  \begin{eqnarray*}
    \left( \depr{f} \circ f^{\ast} \right)  \arobj{p}{A}{Y} & = &
    \depr{f} \arobj{{p_1}}{{X \times_Y A}}{X}\\
    & = & \arobj{{\tmop{bp}}}{{\ppsect{f}{{p_1}}}}{Y}
  \end{eqnarray*}
  and claim that the morphism $\eta_p$ is universal from $\smallarobj{p}{A}{Y}$
  to $\depr{f}$.

  Assume $\smallarobj{q}{B}{X} \in \slice{\ens}{X}$ and
  \begin{eqnarray*}
    w : \arobj{p}{A}{Y} & \longrightarrow & \depr{f} \arobj{q}{B}{X} =
    \arobj{{\tmop{bp}}}{{\ppsect{f}{q}}}{Y}
  \end{eqnarray*}
  a morphism in $\slice{\ens}{Y}$. We have in particular
  \begin{eqnarray*}
    w (a) & = & (p (a), s_{w, a})
  \end{eqnarray*}
  for some $f$-section $s_{w, a}$ of $q$ over the basepoint $p (a)$. Now $(x,
  a) \in X \times_Y A$ implies that $x \in f^{- 1} (p (a))$. Let
  \begin{eqnarray*}
    \bar{w} : X \times_Y A & \longrightarrow & B\\
    (x, a) & \mapsto & s_{w, a} (x)
  \end{eqnarray*}
  But $s_{w, a}$ is an $f$-section of $\smallarobj{q}{B}{X}$, so $q \circ s_{w, a}
  = \tmop{id}$ hence
  \begin{eqnarray*}
    \bar{w} : f^{\ast} \arobj{p}{A}{Y} = \arobj{{p_1}}{{X \times_Y A}}{X} &
    \longrightarrow & \arobj{q}{B}{X}
  \end{eqnarray*}
  is a morphism in $\slice{\ens}{X}$. We have the map
  \begin{eqnarray*}
    \depr{f} (\bar{w}) : \ppsect{f}{{p_1}} & \longrightarrow & \ppsect{f}{q}\\
    (y, s) & \mapsto & (y, \bar{w} \circ s)
  \end{eqnarray*}
  which is a morphism $$\depr{f} (\bar{w}) : \arobj{{\tmop{bp}}}{{\ppsect{f}{{p_1}}}}{Y} \rightarrow
  \arobj{{\tmop{bp}}}{{\ppsect{f}{q}}}{Y}$$ in
  $\slice{\ens}{Y}$ (c.f. Proposition \ref{prop:fun}) so in particular
  \begin{eqnarray*}
    \left( \depr{f} (\bar{w}) \circ \eta_p \right) (a) & = & \depr{f}
    (\bar{w}) (p (a), \langle \tmop{id}, k_a \rangle)\\
    & = & (p (a), \bar{w} \circ \langle \tmop{id}, k_a \rangle)
  \end{eqnarray*}
  We further have
  \begin{eqnarray*}
    (\bar{w} \circ \langle \tmop{id}, k_a \rangle) (x) & = & \bar{w} (x, k_a
    (x))\\
    & = & \bar{w} (x, a)\\
    & = & s_{w, a} (x)
  \end{eqnarray*}
  for all $x \in f^{- 1} (p (a))$, hence $w = \depr{f} (\bar{w}) \circ
  \eta_p$. In other words, the diagram
$$
\xymatrix{
{\begin{bsmallmatrix}A \\ \;\; \downarrow p \\Y \end{bsmallmatrix}}
\ar[rr]^(.35){\eta_p} \ar[drr]_w
&&
\left ( \depr{f} \circ f^\ast \right )
{\begin{bsmallmatrix}A \\ \;\; \downarrow p \\Y \end{bsmallmatrix}}
\ar[d]^{\depr{f} (\bar{w})}
\\
&& \depr{f}
{\begin{bsmallmatrix}B \\ \;\; \downarrow q \\ X \end{bsmallmatrix}}
}
$$
  in {\slice{{\ens}}{Y}} commutes. It is straightforward to verify that  $\depr{f} (\bar{w})$ is the unique map with this property.

\end{proof}

\subsubsection{Existence and Tracking}

Assemblies are basically sets with bells and whistles while the functor
$\Gamma :
\AsmA \rightarrow \ens$ creates and
preserves finite limits (c.f. section \ref{s:AsmA}). Our strategy to prove that $\AsmA$ is an LCCC is thus to
add realizers to the set-theoretical constructions above. In order to retrieve some of the
latter we need to trace the relevant proofs. We first introduce some human
readable notation for $\lambda$-calculus as we shall systematically exploit the fact that any $\lambda$-term with parameters in $\S$
is in $\S$ \cite{Miq20}.

\begin{notation*}
  We shall systematically use the following $\lambda$-calculus macros
  \begin{eqnarray*}
    \upsilon \odot \sigma & \assign &
    \lam{x}{\upsilon (\sigma x)}
    \qquad x
    \nin \tmop{FV} (\upsilon) \cup \tmop{FV} (\sigma)
    \\
    \lpair{\theta}{\sigma} & \assign &
    \lam{s}{s \theta \sigma}
    \hspace{3em}
    s \nin \tmop{FV} (\theta) \cup \tmop{FV} (\sigma)
    \\
    \lpione & \assign & \lam{z}{z \left( \lam{x \nocomma
    y}{x} \right)}
    \\
    \lpitwo & \assign & \lam{z}{z \left( \lam{x \nocomma y}{y}
    \right)}
  \end{eqnarray*}
\end{notation*}

\begin{remark}
  \label{rem:pull}Recall that $\AsmA$
  has finite limits (c.f. Section \ref{ss:FinLim}). Consider the pullback square
  $$
  \xymatrix{
      {X \times_Y A} \ar[r]^{p_2} \ar[d]_{p_1} \pullbackcorner & A \ar[d]^p \\
      X \ar[r]_f & Y
  }
  $$

  We have
  \begin{itemize}
    \item $\car{\pfib{X}{Y}{A}} = \underline{X} \times_{\underline{Y}}
    \underline{A}$ with $\exi{X \times_Y A} (x, a) = \exi{X} (x) \sqcap
    \exi{A} (a)$;

    \item the first projection $p_1 : \underline{X} \times_{\underline{Y}}
    \underline{A} \rightarrow \underline{X}$ tracked by
    $\intrpa{\lpione}$; 
    \item the second projection $p_2 : \underline{X} \times_{\underline{Y}}
    \underline{A} \rightarrow \underline{A}$ tracked by
    $\intrpa{\lpitwo}$. 
  \end{itemize}
  Assume $u : Z \rightarrow X$ tracked by $\varphi \in \S$ and $v : Z
  \rightarrow A$ tracked by $\psi \in \S$ such that $p \circ v = f \circ u$.
  The canonical morphism $\langle u, v \rangle : Z \rightarrow X \times_Y A$
  is given by
  \begin{itemize}
    \item $\underline{\langle u, v \rangle} = \left\langle \underline{u},
    \underline{v} \right\rangle$;

    \item the tracker $\intrpa{\lambda x. \lpair{\varphi x}{\psi x}} =
    \intrpa{\lam{x \nocomma}{z (\varphi x) (\psi x)}}$.
  \end{itemize}
  The fiber $f^{- 1} (y)$ of $y \in Y$ can be seen as a particular case, its
  existence predicate being $$\exi{f^{- 1} (y)} (x) = \exi{X} (x) \sqcap \exi{Y}
  (y)$$
\end{remark}

\begin{definition}
  Let $f : X \rightarrow Y$ be a morphism \ in
  $\AsmA$, $\smallarobj{t}{W}{X} \in
  \slice{\AsmA}{X}$ and a basepoint $y
  \in Y$. An $f$-section of $t$ is a morphism $s \in
  \AsmA (f^{- 1} (y), W)$ such that $t
  \circ s = \tmop{id}$.
\end{definition}

An $f$-section in $\AsmA$ is thus just an
$f$-section admitting a tracker, which is rather unsurprising.

\begin{notation*}
  Given a morphism $f : X \rightarrow Y$ in
  $\AsmA$ and an object $\smallarobj{t}{W}{X}
  \in \slice{\AsmA}{X}$, we shall overload notation and write
  \begin{eqnarray*}
    \apsect{f}{t} & \assign & \{ (y, s) |y \in Y, s \in
    \AsmA (f^{- 1} (y), W), t \circ s =
    \tmop{id} \}
  \end{eqnarray*}
  for the set of all $f$-sections of $\smallarobj{t}{W}{X}$ taken over all
  basepoints and ${\tmop{bp}} : \apsect{f}{t} \rightarrow Y$ for the first
  projection.
\end{notation*}

\begin{lemma}
  \label{lem:psect-asm}Let $\smallarobj{t}{W}{X} \in
  \slice{\AsmA}{X}$.
  \begin{enumeratenumeric}
    \item $\apsect{f}{t}$ is an assembly with existence predicate given by
    \begin{eqnarray*}
      \Ex{\apsect{f}{t}} \assign \Ex{Y} \iand
      \bigimeet_{x \in f^{- 1} (y)} \left( \exi{X} (x) \rightarrow
      \exi{W} (s (x)) \right)
    \end{eqnarray*}
    \item $\intrpa{\lpione}$
    is a
    tracker for the basepoint map $\tmop{bp} : \apsect{f}{t} \rightarrow Y$.
  \end{enumeratenumeric}
\end{lemma}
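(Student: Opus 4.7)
The plan is to unfold the definitions and verify the two claims by a direct Curry--Howard-style computation, using the explicit description of the fiber $f^{-1}(y)$ recalled in Remark~\ref{rem:pull} and the closure of $\S$ under interpretation of $\lambda$-terms with parameters in~$\S$.

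For part~(1), fix $(y,s) \in \apsect{f}{t}$. By Remark~\ref{rem:pull} the fiber $f^{-1}(y)$ carries the existence predicate $\Ex{f^{-1}(y)}(x) = \Ex{X}(x) \iand \Ex{Y}(y)$, and since $s \in \AsmA(f^{-1}(y), W)$ there exists a tracker $\sigma_s \in \S$ with
\[
\sigma_s \;\ile\; \bigimeet_{x \in f^{-1}(y)}\bigl(\Ex{X}(x) \iand \Ex{Y}(y) \to \Ex{W}(s(x))\bigr).
\]
The element $e_y \assign \Ex{Y}(y)$ lies in $\S$ by definition of the existence predicate on~$Y$. I would then consider the $\lambda$-term with parameters in~$\S$
\[
t \;\assign\; \lpair{e_y}{\lam{x}{\sigma_s\,\lpair{x}{e_y}}},
\]
and observe, by an elementary type-checking along the lines of the proofs of Prop.~\ref{p:BinProd} and Prop.~\ref{p:Equalizer}, that
\[
(t)^{\A} \;\ile\; \Ex{Y}(y) \,\iand\, \bigimeet_{x \in f^{-1}(y)}\bigl(\Ex{X}(x) \to \Ex{W}(s(x))\bigr).
\]
Since $\sigma_s, e_y \in \S$ and $\S$ is closed under the interpretation of $\lambda$-terms with parameters in $\S$, we conclude $(t)^{\A} \in \S$, whence by upward closure the displayed formula defines an element of $\S$. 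This confirms that $\Ex{\apsect{f}{t}}$ takes values in $\S$, so $\apsect{f}{t}$ is a bona fide assembly.

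For part~(2), the basepoint map sends $(y,s)$ to $y$, so I must show
\[
\intrpa{\lpione} \;\ile\; \bigimeet_{(y,s) \in \apsect{f}{t}}\bigl(\Ex{\apsect{f}{t}}(y,s) \to \Ex{Y}(y)\bigr).
\]
Since $\Ex{\apsect{f}{t}}(y,s)$ is by construction a conjunction $\Ex{Y}(y) \iand (\cdots)$, this is nothing but the general fact that $\intrpa{\lpione} \ile a \iand b \to a$ for all $a, b \in \A$, which follows immediately from the second-order encoding of conjunction by taking $c := a$ in the defining meet. The same calculation already appeared in Prop.~\ref{p:BinProd}, so no further work is needed.

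There is no real obstacle here: the statement is essentially a bookkeeping exercise, the only point worth flagging being the need to ``absorb'' the spurious $\Ex{Y}(y)$ factor that the tracking condition on $s$ carries (because of the existence predicate on the fiber) into an outer conjunct of the existence predicate of $\apsect{f}{t}$. Once this is done by the small $\lambda$-term above, both items follow at once.
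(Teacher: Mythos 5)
Your proof is correct and follows essentially the same route as the paper's: pair $\Ex{Y}(y)\in\S$ with a tracker of $s$ to land below the conjunction and invoke closure of $\S$ under $\lambda$-terms with parameters in $\S$, then use $\lpione$ for the basepoint map. You are in fact slightly more careful than the paper, which silently identifies the tracking condition of $s$ relative to $\Ex{f^{-1}(y)}(x)=\Ex{X}(x)\iand\Ex{Y}(y)$ with the one relative to $\Ex{X}(x)$ alone, whereas your term $\lam{x}{\sigma_s\,\lpair{x}{e_y}}$ makes that conversion explicit (only a notational quibble: avoid reusing $t$ for this term, since $t$ already names the object of the slice).
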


\begin{proof}
  Assume $(y, s) \in \apsect{f}{t}$ with $s$ tracked by $\sigma$. We thus
  have
  \begin{eqnarray*}
    \sigma & \ile & \bigimeet_{x \in f^{- 1} (y)} \left( \exi{X}
    (x) \rightarrow \exi{W} (s (x)) \right)
  \end{eqnarray*}
  by hypothesis, hence
  \begin{eqnarray*}
    \intrpa{\lpair{\exi{Y} (y)}{\sigma}} & \ile & \exi{Y} (y) \sqcap
    \bigimeet_{x \in f^{- 1} (y)} \left( \exi{X} (x) \rightarrow \exi{W}
    (s (x)) \right)
  \end{eqnarray*}
  where
  \begin{eqnarray*}
    \intrpa{\lpair{\exi{Y} (y)}{\sigma}} & = & \intrpa{\lambda z.z\: \exi{Y} (y) \:\sigma}
  \end{eqnarray*}
  hence
  \begin{eqnarray*}
    \exi{Y} (y) \sqcap \bigimeet_{x \in f^{- 1} (y)} \left( \exi{X} (x)
    \rightarrow \exi{W} (s (x)) \right) & \in & \S
  \end{eqnarray*}
The second item is obvious (c.f. Section \ref{ss:FinLim}).
\end{proof}

\begin{proposition}
  \label{prop:fun-asm}
  The map of classes
  \begin{eqnarray*}
    \depr{f} : \left(
    \slice{\AsmA}{X} \right)_0 &
    \longrightarrow & \left(
    \slice{\AsmA}{Y} \right)_0
     \\
    \arobj{t}{W}{X} & \mapsto &  \arobj{{\tmop{bp}}}{{\apsect{f}{t}}}{Y}
  \end{eqnarray*}
  extends to a functor $\depr{f} :
  \slice{\AsmA}{X} \rightarrow
  \slice{\AsmA}{Y}$ with action on
  morphisms given by
  \begin{eqnarray*}
    \depr{f} (u) : \arobj{{\tmop{bp}}}{{\apsect{f}{t}}}{Y} & \longrightarrow &
    \arobj{{\tmop{bp}}}{{\apsect{f}{t'}}}{Y} \\
    (y, s) & \mapsto & (y, u \circ s)
  \end{eqnarray*}
\end{proposition}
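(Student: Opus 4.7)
The plan is to replay the set-theoretic proof of Proposition \ref{prop:fun} inside $\AsmA$, tracing realizers at every step by means of Lemma \ref{lem:psect-asm}. First I would dispatch well-definedness on objects: for each $\smallarobj{t}{W}{X} \in \slice{\AsmA}{X}$, Lemma \ref{lem:psect-asm} equips $\apsect{f}{t}$ with an assembly structure and records that the basepoint map $\tmop{bp} : \apsect{f}{t} \to Y$ is tracked by $\intrpa{\lpione}$, so $\smallarobj{{\tmop{bp}}}{{\apsect{f}{t}}}{Y}$ is a legitimate object of $\slice{\AsmA}{Y}$.

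For the action on morphisms, fix $u : \smallarobj{t}{W}{X} \to \smallarobj{{t'}}{{W'}}{X}$ in $\slice{\AsmA}{X}$ with tracker $\upsilon \in \S$. I would first check that $\depr{f}(u)$ indeed lands in $\apsect{f}{t'}$: for $(y,s) \in \apsect{f}{t}$, the slice condition on $u$ gives $t' \circ (u \circ s) = (t' \circ u) \circ s = t \circ s = \id$. The heart of the matter is the tracker for the underlying function $(y,s) \mapsto (y, u \circ s)$. A realizer of $\exi{\apsect{f}{t}}(y,s) = \exi{Y}(y) \iand \bigimeet_{x \in f^{-1}(y)}(\exi{X}(x) \to \exi{W}(s(x)))$ decomposes, via $\lpione$ and $\lpitwo$, as a pair $\lpair{\theta}{\sigma}$; the composite $\upsilon \odot \sigma$ then tracks $u \circ s$ as a map $f^{-1}(y) \to W'$, and repackaging via $\lpair{-}{-}$ gives a realizer of $\exi{\apsect{f}{t'}}(y, u \circ s)$. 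Concretely, I would exhibit
\[
\intrpa{\lam{z}{\lpair{\lpione\, z}{\upsilon \odot (\lpitwo\, z)}}} ~\ile~ \bigimeet_{(y,s)\in\apsect{f}{t}}\bigl(\exi{\apsect{f}{t}}(y,s) \to \exi{\apsect{f}{t'}}(y, u \circ s)\bigr)
\]
as the required tracker, noting that the left-hand side lies in $\S$ because the only parameter $\upsilon$ occurring in the $\lambda$-term is already in $\S$. Commutation with the basepoint projections holds on the nose, so $\depr{f}(u)$ is a morphism in $\slice{\AsmA}{Y}$.

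Functoriality is then immediate from the set-theoretic level: the underlying map of $\depr{f}(\id)$ sends $(y,s) \mapsto (y, \id \circ s) = (y,s)$, and that of $\depr{f}(u' \circ u)$ sends $(y,s) \mapsto (y, (u' \circ u) \circ s) = (y, u' \circ (u \circ s))$, which is precisely the underlying map of $\depr{f}(u') \circ \depr{f}(u)$; since equality of morphisms in $\AsmA$ coincides with equality of set-theoretic maps, the functor laws follow. The only genuine obstacle is the bookkeeping of trackers in the morphism clause, and it is resolved by the single $\lambda$-term displayed above, assembled from the macros $\lpione$, $\lpitwo$, $\lpair{-}{-}$, $\odot$ introduced just before Remark \ref{rem:pull} and the general fact that any closed $\lambda$-term with parameters in $\S$ is interpreted inside $\S$.
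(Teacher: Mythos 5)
Your proposal is correct and follows essentially the same route as the paper: well-definedness on objects via Lemma \ref{lem:psect-asm}, the set-theoretic content delegated to Proposition \ref{prop:fun}, and a tracker for $\depr{f}(u)$ obtained by unpairing a realizer of $\exi{\apsect{f}{t}}(y,s)$, post-composing the section tracker with $\upsilon$, and repairing. Your term $\lam{z}{\lpair{\lpione\, z}{\upsilon \odot (\lpitwo\, z)}}$ is exactly the paper's $\lambda \lpair{\theta}{\sigma}.\lpair{\theta}{\upsilon \odot \sigma}$ with the pattern-matching sugar expanded.
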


Notice that we overload notation here as well.

\begin{proof}
$\smallarobj{{\tmop{bp}}}{{\apsect{f}{t}}}{Y} \in \slice{\AsmA}{X}$
while
$\smallarobj{{\tmop{bp}}}{{\apsect{f}{t'}}}{Y} \in \slice{\AsmA}{Y}$
(c.f. Lemma \ref{lem:psect-asm}). Moreover, $\depr{f} (u)$ is the action of a functor on the carriers (c.f. Proposition \ref{prop:fun-asm}). Assume $\upsilon$ tracks $u$. We thus have
  \begin{eqnarray*}
    \intrpa{\lambda \lpair{\theta}{\sigma} . \lpair{\theta}{\upsilon \odot \sigma}} &
    \ile & \exi{Y} (y) \sqcap \bigimeet_{x \in f^{- 1} (y)}
    \left( \exi{X} (x) \rightarrow \exi{W} (s (x)) \right)\\
    &  & \quad \to \;\; \exi{Y} (y) \sqcap \bigimeet_{x \in f^{- 1} (y)} \left( \exi{X}
    (x) \rightarrow \exi{W} ((u \circ s) (x)) \right)
  \end{eqnarray*}
  so
  \begin{eqnarray*}
    \intrpa{\lambda \lpair{\theta}{\sigma} . \lpair{\theta}{\upsilon \odot \sigma}}
    & =&
    \intrpa{\lambda w. \lambda s.s \left( \left( \lam{z}{z \left( \lam{x
    \nocomma y}{x} \right)} \right) (w) \right) \left( \lambda x. \upsilon
    \left( \left( \left( \lam{z}{z \left( \lam{x \nocomma y}{y} \right)}
    \right) (w) \right) x \right) \right)}
  \end{eqnarray*}
  is a tracker of $\depr{f} (u)$.
\end{proof}

{\skol{\label{skol:part-sec}Assume $f : X \rightarrow Y$ in
$\AsmA$, a basepoint $y \in
Y$ and $v : f^{- 1} (y) \rightarrow p^{- 1} (y)$ in
$\AsmA$. The pairing map
\begin{eqnarray*}
  \langle \tmop{id}, v \rangle : f^{- 1} (y) & \longrightarrow & X \times_Y Y
\end{eqnarray*}
is an $f$-section of $p_1 : X \times_Y A \rightarrow X$ over $y$ in
$\AsmA$.}}

\begin{proof}
  Consider the diagram in the proof of Lemma \ref{lem:part-sec}, seen as a
  diagram in $\AsmA$. All the reasoning
  at the level of the carriers is still valid, in particular that $\langle
  \tmop{id}, v \rangle $ is an $f$-section of the carrier $p_1 : \underline{X}
  \times_{\underline{Y}} \underline{A} \rightarrow \underline{X}$. But now all
  the objects are equipped with existence predicates while all the maps are
  tracked (c.f. Remark \ref{rem:pull}).
\end{proof}

{\skol{\label{skol:adj}The functor $\depr{f} :
\slice{\AsmA}{X} \rightarrow
\slice{\AsmA}{Y}$ is right adjoint to
the reindexation functor $f^{\ast} :
\slice{\AsmA}{Y} \rightarrow
\slice{\AsmA}{X}$.}}

\begin{proof}
  We trace the proof of Proposition \ref{prop:adj} and equip the relevant maps
  with trackers.
  Assume $\smallarobj{p}{A}{Y} \in
  \slice{\AsmA}{Y}$ with $p$ tracked by
  $\varpi$. Assume $a \in A$. The map
  \begin{eqnarray*}
    \eta_p : A & \longrightarrow & \apsect{f}{p}\\
    a & \mapsto & (p (a), \langle \tmop{id}, k_a \rangle)
  \end{eqnarray*}
  is well-defined with respect to $\AsmA$ (c.f. Scholium
  \ref{skol:part-sec}). We further have
  \begin{eqnarray*}
    \intrpa{\lambda s. \lpair{\varpi \nocomma s}{\lpair{\lambda x.x}{\lambda x.s}}} &
    \ile &  \left( \exi{A} (a) \rightarrow
    \exi{{\apsect{f}{p}}} (p (a), \langle \tmop{id}, k_a \rangle)
    \right)
  \end{eqnarray*}
  so
  \begin{eqnarray*}
    \intrpa{\lambda s. \lpair{\varpi \nocomma s}{\lpair{\lambda x.x}{\lambda x.s}}} & =
    & \intrpa{\lam{s}{\lam{t}{t \nocomma (\varpi \nocomma s) \left( \lam{x \nocomma
    z}{z \nocomma x \nocomma s} \right)}}}
  \end{eqnarray*}
  is a tracker for $\eta_p$, hence $\eta_p$ is a morphism in $\AsmA$. It follows that
  \begin{eqnarray*}
    \eta_p : \arobj{p}{A}{Y} & \to &
    \arobj{{\tmop{bp}}}{{\apsect{f}{p}}}{Y}
  \end{eqnarray*}
is a morphism in $\slice{\AsmA}{Y}$, this since $\tmop{bp}$ admits a tracker (c.f. Lemma \ref{lem:psect-asm}.2) while the relevant triangle of carriers commutes (c.f. Proposition \ref{prop:adj-set}).

  Let $\smallarobj{q}{B}{X} \in
  \slice{\AsmA}{X}$ with $q$ tracked by
  $\chi$. Let
  \begin{eqnarray*}
    w : \arobj{p}{A}{Y} & \longrightarrow & \depr{f}  \arobj{q}{B}{X} =
    \arobj{{\tmop{bp}}}{{\apsect{f}{q}}}{Y}
  \end{eqnarray*}
  be a morphism in $\slice{\AsmA}{Y}$
  where
  \begin{eqnarray*}
    w (a) & = & (p (a), s_{w, a})
  \end{eqnarray*}
  for some $f$-section $s_{w, a}$ of $q$ over the basepoint $p (a)$. Assume $\xi$ tracks $w$.

  Recall
  that the canonical map on the carriers with respect to $\eta_p$ is
  $\depr{f}(\bar{w})$
with
  \begin{eqnarray*}
    \bar{w} : X \times_Y A & \longrightarrow & B\\
    (x, a) & \mapsto & s_{w, a} (x)
  \end{eqnarray*}
  All we need to complete the proof is thus a tracker for $\bar{w}$, which is of type $$\bigimeet_{(x,a) \in \pfib{X}{Y}{A}} \exi{\pfib{X}{Y}{A}} (x, a)
  \to \exi{B} (\bar{w} (x, a))$$
    Suppose $\chi$ realises $\exi{X} (x)$
  and $\alpha$ realises $\exi{A} (a)$. Then $\xi \alpha$ realises
  $\mathbf{E}_{\apsect{f}{q}} (p (a), s_{w, a})$, hence $\lpitwo
  (\xi \alpha)$ tracks $s_{w, a}$ by definition of $\mathbf{E}_{\apsect{f}{q}}$.
  We thus have
  \begin{eqnarray*}
    \intrpa{\lambda \lpair{\chi}{\alpha} .\lpitwo (\xi \alpha) \chi} &
    \ile & \exi{\pfib{X}{Y}{A}} (x, a) \rightarrow \exi{B} (s_{w, a} (x))
    \\
    & = & \exi{\pfib{X}{Y}{A}} (x, a) \rightarrow \exi{B} (\bar{w}(x, a))
  \end{eqnarray*}
  so
  \begin{eqnarray*}
    \intrpa{\lambda \lpair{\chi}{\alpha} .\lpitwo (\xi \alpha) \chi} & = &
    \intrpa{\lambda v.\lpitwo \left( \xi \left( \lpitwo v \right) \right)
    \left( \lpione v \right)}\\
    & = & \intrpa{\lambda v. \left( \lam{z}{z \left( \lam{x \nocomma y}{y} \right)}
    \right) \left( \xi \left( v \left( \lam{z}{z \left( \lam{x \nocomma y}{y}
    \right)} \right) \right) \right) \left( \left( \lam{z}{z \left( \lam{x
    \nocomma y}{x} \right)} \right) v \right)}
  \end{eqnarray*}
  is a tracker for $\bar{w}$.
\end{proof}

\begin{theorem}
  \label{th:asm-lccc}
$\AsmA$ is an LCCC.
\end{theorem}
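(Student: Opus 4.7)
The proof will be a short assembly of the machinery already set up in the preceding pages. My plan is to observe that all of the real work has been done, and that Theorem~\ref{th:asm-lccc} falls out as an immediate corollary of Theorem~\ref{th:lccc} once two prior results are invoked.

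First, I would recall that $\AsmA$ has all finite limits by Proposition~\ref{p:FinLim}, so the hypothesis of Theorem~\ref{th:lccc} is satisfied. Next, for an arbitrary morphism $f : X \to Y$ in $\AsmA$, the reindexation functor $f^{\ast} : \slice{\AsmA}{Y} \to \slice{\AsmA}{X}$ exists because pullbacks along $f$ exist in $\AsmA$ (cf.\ Remark~\ref{rem:pull}), and Scholium~\ref{skol:adj} exhibits a right adjoint $\depr{f} : \slice{\AsmA}{X} \to \slice{\AsmA}{Y}$ to this reindexation functor, constructed via the set of tracked partial $f$-sections $\apsect{f}{t}$.

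The proof then concludes by directly invoking the equivalence (i) $\liff$ (ii) of Theorem~\ref{th:lccc}: since $f^{\ast}$ has a right adjoint for every morphism $f : X \to Y$ in $\AsmA$, and since $\AsmA$ has finite limits, $\AsmA$ is an LCCC. There is no real obstacle at this stage, as all the technical content (tracking the partial sections, establishing universality of the unit $\eta_p$, and constructing the tracker for $\bar{w}$) has already been handled in the preceding scholia. The theorem is essentially a bookkeeping step combining Proposition~\ref{p:FinLim}, Scholium~\ref{skol:adj}, and Theorem~\ref{th:lccc}.
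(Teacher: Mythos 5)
Your proof is correct and matches the paper's own argument exactly: the paper also concludes by citing Scholium~\ref{skol:adj} together with Theorem~\ref{th:lccc}, with finite limits supplied by Proposition~\ref{p:FinLim}. Your write-up merely spells out the bookkeeping more explicitly.
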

\begin{proof}
Scholium \ref{skol:adj} and Theorem \ref{th:lccc}.
\end{proof}
\subsection{Strong subobject classifier}\label{ss:SSOC}
Let us consider the assembly 
$\SOC:=\Delta{\mathcal{P}(1)}$ 
and the morphism $t:\mathbf{1}\to\SOC$ 
defined by $t(*):=1$ and tracked by $\top$.

As a tool to show that $\SOC$ (equipped
with $t$) is a strong
suboject classifier, we note that every morphism
$f:A\to B$ (tracked by $s_f$)
of $\AsmA$ can be factorized into
an epimorphism $\hat{f}:A\to\Im(f)$
followed by a monomorphism $m_f : \Im(f)\to B$ 
where $\Im(f)$ is the
assembly defined by :
\begin{itemize}
\item $\Car{\Im(f)}~:=~\{ f(a) \, | \, a \in B\}$
\item $\ds\Ex{\Im(f)}(b)~:=~\ds\Ex{B}(b) 
  $\quad
  for all $b\in \Im(f)$
\end{itemize}
while $\hat{f}$ and $m_f$ are defined by 
$\hat{f}(a):=f(a)$ for all $a \in A$
and $m_f(b):=b$ for all $b \in \Im(f)$, that
are respectively tracked by $s_f$ and
$(\Lam{x}{x})^{\A}$.
In particular, if $f$ is an extremal monomorphism
\cite{Kel69} then $\hat{f}$ is an isomorphism.  

\begin{remark}
Although the carrier of $\Im(f)$ is constructed
as the set theoretic image of $f$, the monomorphism
$m_f : \Im(f)\to B$ is in general 
not the image \cite{Mit69} of the morphism $f$.
As for the epimorphism $\hat{f}$, it is in
general not regular.
\end{remark}

\begin{proposition}[Strong Suboject Classifier]
\label{p:SSOC}
  The assembly $\SOC$ equipped with the arrow
  $t:\mathbf{1}\to\SOC$ is a strong subobject
  classifier in the category $\AsmA$.
\end{proposition}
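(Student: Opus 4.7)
The plan is to verify directly the universal property of $t:\mathbf{1}\to\SOC$ as classifier of strong (equivalently, extremal) monomorphisms in $\AsmA$, exploiting the image factorization $f = m_f \circ \hat{f}$ introduced just above the statement. First I would characterize strong monos via that factorization: a mono $m:S\hookrightarrow X$ is strong precisely when $\hat{m}:S\to\Im(m)$ is an isomorphism in $\AsmA$, which—since $\hat{m}$ is already bijective on carriers—amounts to the existence of a \emph{reverse tracker}
\[\sigma\ile\bigimeet_{s\in\Car{S}}\bigl(\Ex{X}(m(s))\to\Ex{S}(s)\bigr)\quad\text{in }\S.\]
The forward implication uses the right lifting property of a strong mono against the epi $\hat m$ to extract a retraction of $\hat m$.

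Given such an $m$, I would define $\chi_m:X\to\SOC$ by $\chi_m(x):=1$ if $x\in m(\Car{S})$ and $\chi_m(x):=\emptyset$ otherwise. Because $\Ex{\SOC}$ is constantly $\top$ and $\top\in\S$ (since $\mathbf{K}^{\A}\ile\top$ and $\S$ is upwards closed), $\chi_m$ is trivially tracked. The core task is to show that the square
\[\xymatrix@C=30pt@R=24pt{S\ar[r]^{\ds !_S}\ar[d]_{\ds m} & \mathbf{1}\ar[d]^{\ds t}\\ X\ar[r]_{\ds\chi_m} & \SOC}\]
is a pullback in $\AsmA$. For $Z\in\AsmA$ with morphisms $g:Z\to X$ and $h:Z\to\mathbf{1}$ satisfying $\chi_m\circ g=t\circ h$, the square equation forces $g(z)\in m(\Car{S})$, so injectivity of $m$ gives a unique set-theoretic $k:\Car{Z}\to\Car{S}$ with $m\circ k=g$ and $!_S\circ k = h$. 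The tracker for $k$ comes from composing a tracker $\sigma_g$ for $g$ with the reverse tracker $\sigma$, via the $\lambda$-term $(\Lam{z}{\sigma\,(\sigma_g\,z)})^{\A}$, which lies in $\S$ by closure of separators under $\lambda$-terms with parameters in $\S$.

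Uniqueness of $\chi_m$ follows from Proposition~\ref{p:PreserveFinLim}: any $\chi':X\to\SOC$ making the square into a pullback exhibits $\{x\in\Car{X}:\chi'(x)=1\}$ as the carrier of that pullback, which must therefore coincide with $m(\Car{S})$ as a subset of $\Car{X}$; since $\mathcal{P}(1)=\{\emptyset,1\}$ has only two elements, this pins $\chi'$ down pointwise to $\chi_m$. I also need to check that $t:\mathbf{1}\to\SOC$ is itself a strong mono, which is immediate as both objects carry the constant existence predicate $\top$. The main obstacle I anticipate is pinning down the correct characterization of strong monos in $\AsmA$ and extracting the reverse tracker $\sigma$; once that is in hand, both the pullback verification and the uniqueness argument reduce to routine tracking calculations of the style already used in Section~\ref{ss:FinLim}.
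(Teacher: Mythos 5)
Your proof is correct and follows essentially the same route as the paper: reduce a strong (hence extremal) monomorphism to the inclusion $m_f:\Im(f)\to B$ via the factorization $f=m_f\circ\hat{f}$, and classify it by the characteristic map $\chi_f(b)=\{x\in 1\mid b\in\Car{\Im(f)}\}$ tracked by $\top$. The paper's own proof stops there; your reverse-tracker characterization of strong monos, the explicit tracker $(\Lam{z}{\sigma\,(\sigma_g\,z)})^{\A}$ for the mediating map, and the uniqueness argument via Proposition~\ref{p:PreserveFinLim} supply exactly the verifications the paper leaves implicit.
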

\begin{proof}
Let $f:A\to B$ be a strong monomorphism.
As a strong monomorphism, $f$ is an extremal
monomorphism
and therefore $\hat{f}: A\to\Im(f)$
is an isomorphim \cite{Kel69}. It follows that
$f$ and $m_f:\Im(f)\to B$ represent the same
subobject.

Finally, the classifying map of $m_f$ 
is $\chi_f : B\to\Omega$ defined by 
$\chi_f(b) := \{ x \in 1 \, | \, b \in \Car{\Im(f)}\}$ and tracked by $\top$.
\end{proof}
\subsection{The natural numbers object}\label{ss:NNO}
Let us now consider the assembly $\N$ defined by:
\begin{itemize}
\item $\Car{\NN}~:=~\N$
\item $\ds\Ex{\NN}(n)~:=~
  \bigimeet_{\mcent{a\in\A^{\N}}}
  \bigl(a_0\to\bigimeet_{\mcent{p\in\N}}(a_p\to a_{p+1})\to a_n\bigr)$\quad
  for all $n\in\N$.
\item[](By induction on $n\in\N$, we check that
  $(\Lam{xf}{f^nx})^{\A}\ile\Ex{\NN}(n)$, hence
  $\Ex{\NN}(n)\in\S$.)
\end{itemize}
We also consider the morphisms $z:\mathbf{1}\to\NN$ and
$s:\NN\to\NN$ defined by $z(*):=0$ and $s(n):=n+1$ for all $n\in\N$,
that are respectively tracked by
$(\Lam{zxf}{x})^{\A},(\Lam{nxf}{f\,(n\,x\,f)})^{\A}\in\S$.

\begin{proposition}[Natural Numbers Object]\label{p:NNO}
  The assembly $\NN$ equipped with the two arrows
  $z:\mathbf{1}\to\NN$ and $s:\NN\to\NN$ is a natural numbers
  object in the category $\AsmA$.
\end{proposition}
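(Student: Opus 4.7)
My plan is to exploit the fact that the existence predicate $\Ex{\NN}(n)$ is literally the Church numeral at~$n$, so the required iterator is given by Curry-Howard exactly as one would write a $\lambda$-term for it. Concretely, given any assembly $X$ together with morphisms $x:\mathbf{1}\to X$ (tracked by some $\sigma_x\in\S$) and $f:X\to X$ (tracked by some $\sigma_f\in\S$), I define $h:\N\to\Car{X}$ by the set-theoretic recursion $h(0):=x(*)$ and $h(n+1):=f(h(n))$. The equations $h\circ z=x$ and $h\circ s=f\circ h$ then hold on the nose as set-theoretic identities, so the only real content is (i) that $h$ is tracked, and (ii) that any competitor $h':\NN\to X$ satisfying the two equations coincides with~$h$ as a set-theoretic map (whence as a morphism in $\AsmA$, which is faithful over $\Set$).

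For the tracking I instantiate the universal quantifier in the definition of~$\Ex{\NN}(n)$ with the specific family $a_p:=\Ex{X}(h(p))\in\A$, which gives
\[
  \Ex{\NN}(n)~\ile~
  \Ex{X}(h(0))\to
  \Bigl(\bigimeet_{p\in\N}\bigl(\Ex{X}(h(p))\to\Ex{X}(h(p+1))\bigr)\Bigr)
  \to\Ex{X}(h(n))\,.
\]
Now $\sigma_x\ile\Ex{X}(h(0))$ because $x$ is tracked and $\Ex{\mathbf{1}}(*)=\top$, while $\sigma_f\ile\bigimeet_{y\in\Car{X}}(\Ex{X}(y)\to\Ex{X}(f(y)))$ dominates the middle meet above, since $f(h(p))=h(p+1)$. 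Putting these together, the $\lambda$-term $(\Lam{n}{n\,\sigma_x\,\sigma_f})^{\A}$ lies below $\bigimeet_{n\in\N}\bigl(\Ex{\NN}(n)\to\Ex{X}(h(n))\bigr)$ and belongs to~$\S$ as a closed $\lambda$-term with parameters in~$\S$, so it is a tracker of~$h$.

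Uniqueness is obtained by induction on $n\in\N$: from $h'\circ z=x$ one gets $h'(0)=x(*)=h(0)$, and from $h'\circ s=f\circ h'$ the inductive step $h'(n+1)=f(h'(n))=f(h(n))=h(n+1)$, so $h'=h$ as set-theoretic maps and hence in $\AsmA$. I do not expect any serious obstacle here: the one nontrivial step is recognizing that $\Ex{\NN}(n)$ has been designed precisely so that its elements act as Church-style iterators, after which the tracker essentially writes itself.
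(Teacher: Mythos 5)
Your proof follows essentially the same route as the paper's: define $h$ by set-theoretic recursion, instantiate the meet in $\Ex{\NN}(n)$ at the family $a_p:=\Ex{X}(h(p))$ so that $n$ acts as a Church iterator, and conclude by faithfulness over $\Set$ for uniqueness. The one slip is the claim $\sigma_x\ile\Ex{X}(h(0))$: the tracking condition for $x:\mathbf{1}\to X$ only gives $\sigma_x\ile\top\to\Ex{X}(x(*))$, and in a general implicative structure $\top\to b$ need not lie below $b$; you must first apply the tracker to $\top$, obtaining $\sigma_x\,\top\ile\Ex{X}(h(0))$ by the adjunction for application, and accordingly use the term $(\Lam{n}{n\,(\sigma_x\,\top)\,\sigma_f})^{\A}$ --- which is exactly the tracker the paper exhibits.
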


\begin{proof}
  Given an assembly $A$ and two morphisms $q:\mathbf{1}\to X$
  and $f:X\to X$, we want to show that there is a unique morphism
  $u:\NN\to X$ such that the following diagram commutes:
  $$\xymatrix@R=40pt@C=40pt{
    \mathbf{1}\ar[r]^{\ds z}\ar[dr]_{\ds q}
    &\NN\ar[r]^{\ds s}\ar@{-->}[d]^{\ds u}&\NN\ar@{-->}[d]^{\ds u} \\
    &X\ar[r]_{\ds f}&X\\
  }$$
  \smallbreak\noindent
  \emph{Existence of~$u$.}\quad
  Consider the map $u:\N\to\Car{X}$ defined by $u(0)=q(0)$
  and $u(n+1)=f(u(n))$ for all $n\in\N$.
  Given trackers $s_q,s_f\in\S$ of the morphisms $q:\mathbf{1}\to X$
  and $f:X\to X$ (resp.), we observe that\ \
  $s_q\,\top\ile\Ex{X}(u(0))$\ \ and\ \
  $s_f\ile\bigimeet_{p\in\N}
  \bigl(\Ex{X}(u(p))\to\Ex{X}(u(p+1))\bigr)$\ \
  (from the definition of the map~$u$), from which we deduce that
  $$(\Lam{m}{m\,(s_q\,\top)\,s_f})^{\A}~\ile~
  \bigimeet_{\mcent{n\in\N}}\bigl(\Ex{\NN}(n)\to\Ex{X}(u(n))\bigr)$$
  using techniques of semantic typing.
  Therefore~$u$ is a morphism of type $\NN\to X$.
  \smallbreak\noindent
  \emph{Uniqueness of~$u$.}\quad
  Obvious from the equalities $u\circ z=q$ and
  $u\circ s=f\circ u$.
\end{proof}

\section{The particular case of forcing}
\label{s:Forcing}

\subsection{Implicative algebras and forcing}
As mentioned in~\Sect~\ref{s:ImpAlg}, complete Heyting (or Boolean)
algebras $(H,\ile)$ are particular cases of implicative structures,
namely: the implicative structures whose implication `$\to$' is
derived from the ordering `$\ile$' using Heyting's adjunction:
$$c\ile(a\to b)\qquad\text{iff}\qquad(c\imeet a)\ile b
\eqno(\text{for all}~a,b,c\in H)$$
Such implicative structures are immediately turned into implicative
algebras, just by endowing them with the trivial separator
$\S:=\{\top\}$.
The corresponding tripos
$\P:\Set^{\op}\to\HA$
is then (naturally)
isomorphic to the forcing tripos
$\Hom_{\Set}(\Anon,\,H):\Set^{\op}\to\HA$.

However, there are many cases of implicative algebras~$\A$ whose
associated tripos turn out to be isomorphic to a forcing tripos
(i.e.\ of the form $\Hom_{\Set}(\Anon,\,H)$ for some complete Heyting
algebra~$H$) although the complete lattice underlying~$\A$ is not a
Heyting algebra.
In~\cite{Miq20}, this situation is captured by the following
theorem (Theorem 4.13,~\Sect~4.5):
\begin{theorem}[Characterizing forcing triposes]%
  \label{th:CharacForTrip}
  For each implicative algebra $\A$ (writing~$\S$ its separator),
  the following are equivalent:
  \begin{enumerate}
  \item The tripos
  induced by~$\A$ is isomorphic to a forcing tripos.
  \item The separator $\S\subseteq\A$ is a principal filter of~$\A$.
  \item The separator $\S\subseteq\A$ is finitely generated and
    contains the non-deterministic choice operator
    $\Fork^{\A}:=\bigimeet_{a,b\in\A}(a\to b\to a\imeet b)
    =(\Lam{xy}{x})^{\A}\imeet(\Lam{xy}{y})^{\A}$.
  \end{enumerate}
\end{theorem}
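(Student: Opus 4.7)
The plan is to prove the three-way equivalence via (2) $\liff$ (3) using filter-theoretic algebra, then (2) $\limp$ (1) by explicit construction of the target Heyting algebra, and finally the harder converse (1) $\limp$ (2) by extracting algebraic information from the tripos isomorphism.

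For (2) $\liff$ (3), I would invoke \cite[Prop.~3.27]{Miq20}, already cited in the previous section, which states that $\Fork^{\A}\in\S$ if and only if $\S$ is closed under binary meets, i.e., $\S$ is a filter. Any principal filter $\{a\in\A\mid s_0\ile a\}$ is a filter and is finitely generated by $\{s_0\}$, yielding (2) $\limp$ (3). Conversely, if $\S$ is a filter finitely generated by $s_1,\ldots,s_n$, then $\S$ coincides with the principal filter generated by $s_0\assign s_1\imeet\cdots\imeet s_n$: closure under meets ensures $s_0\in\S$, while upward closure gives the reverse inclusion. This gives (3) $\limp$ (2).

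For (2) $\limp$ (1), assume $\S=\{a\mid s_0\ile a\}$. The key computation is that for any set $I$ and any $\phi,\psi\in\A^I$,
\[
\phi \ent_{\S[I]} \psi
\quad\liff\quad
s_0 \ile \bigimeet_{i \in I}(\phi(i) \to \psi(i))
\quad\liff\quad
s_0 \ile \phi(i) \to \psi(i) \text{ for all } i \in I.
\]
Thus $\ent_{\S[I]}$ reduces to componentwise entailment at stage $I=1$. Defining $H\assign\A/{\tnent_\S}$, I would verify that $H$ is a complete Heyting algebra with meet, join, and implication inherited from $\A$ (the principal generator $s_0$ acting as a universal ``unit of truth'' to absorb any Heyting-law discrepancy present in $\A$), and that the assignment $I\mapsto(\A^I,\ent_{\S[I]})$ quotients to $I\mapsto H^I$ with pointwise order, matching the forcing tripos on $H$ naturally in $I$.

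For (1) $\limp$ (2), suppose the implicative tripos is isomorphic to a forcing tripos $\Hom_\Set(-,H)$. The isomorphism at $I=1$ identifies $(\A,\ent_\S)/{\tnent_\S}$ with $H$, so $\S$ is the preimage of $\{\top_H\}$ under the resulting quotient $\A\to H$. The hard part will be showing that this preimage is a principal filter, equivalently that $\S$ admits a least element. This step relies on naturality of the tripos isomorphism in $I$, specifically on the behavior of generic predicates witnessing the surjectivity of reindexing functors. The canonical candidate generator $s_0\in\A$ is an element representing $\top_H$ via a tripos-compatible section, but constructing such a section requires careful analysis of the isomorphism at fibers beyond $I=1$. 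This is the main obstacle: translating an abstract tripos-theoretic isomorphism into concrete algebraic data on $\A$, and in particular forcing the ``meet-closure and finite generation'' of condition (3) to come from a single distinguished element.
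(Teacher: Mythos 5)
First, a point of comparison: the paper does not prove this theorem at all --- it is quoted verbatim from \cite{Miq20} (Theorem~4.13, \Sect~4.5), so there is no internal proof to measure your attempt against. Judged on its own, your proposal has two genuine gaps. In $(3)\limp(2)$ you silently replace ``$\S$ is finitely generated \emph{as a separator}'' by ``$\S$ is finitely generated as a filter''. The separator generated by $s_1,\dots,s_n$ contains $\mathbf{K}^{\A}$, $\mathbf{S}^{\A}$ and the interpretation of every closed $\lambda$-term with parameters among the $s_i$, none of which need lie above $s_0:=s_1\imeet\cdots\imeet s_n$; so your remark that ``upward closure gives the reverse inclusion'' establishes ${\uparrow}s_0\subseteq\S$ but not the inclusion $\S\subseteq{\uparrow}s_0$ that you actually need. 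A correct argument must pick a smarter candidate (on the order of $\mathbf{K}^{\A}\imeet\mathbf{S}^{\A}\imeet\Fork^{\A}\imeet s_1\imeet\cdots\imeet s_n$) and then prove that the resulting principal filter is itself a separator --- in particular closed under modus ponens --- and this is precisely where $\Fork^{\A}$ does real work; it is not a one-line filter computation.

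Second, your $(1)\limp(2)$ is not a proof but a description of the obstacle: you correctly identify that one must extract a least element of $\S$ from the tripos isomorphism, but you supply no mechanism for doing so, and this is the only genuinely hard implication in the theorem. Saying that the construction ``relies on naturality'' and ``requires careful analysis of the isomorphism at fibers beyond $I=1$'' leaves that implication open. (Your $(2)\limp(1)$ and $(2)\limp(3)$ sketches are essentially sound, modulo verifying that $\A/{\tnent_{\S}}$ is a \emph{complete} Heyting algebra, which the surrounding text of the paper also asserts.) Since the paper explicitly imports this theorem from \cite{Miq20}, the honest options are to cite it as the paper does or to reproduce Miquel's argument in full; the present sketch does neither.
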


Recall that the separator $\S\subseteq\A$ is a principal filter
when $\S={\uparrow}\{s_0\}$ for some $s_0\in\S$ or, equivalently,
when $\S$ has a smallest element~$s_0=\min(\S)$.
In this case, the Heyting algebra $H:=\A/\S$ induced by the
implicative algebra~$\A$ is complete, and the corresponding tripos is
isomorphic to the forcing tripos $\Hom_{\Set}(\Anon,\,H)$.
(Let us insist on the fact that this does not imply that the lattice
underlying~$\A$ is a complete Heyting algebra.)

However in practice, most implicative algebras come with a
separator~$\S$ that is \emph{finitely generated}, in the sense that
$\S$ is the smallest separator containing some finite subset of~$\A$.
This is typically the case:
\begin{enumerate}
\item In intuitionistic realizability, when the implicative
  algebra~$\A$ is induced by an ordered combinatory algebra (OCA)
  whose elements are generated by finitely many combinators (via
  application).
  The archetypal example of such an OCA is the combinatory algebra
  formed by all closed $\lambda$-terms (or combinatory terms) up to
  $\beta$-conversion, that is generated by the two terms
  $\mathbf{K}\equiv\Lam{xy}{x}$ and
  $\mathbf{S}\equiv\Lam{xyz}{xz(yz)}$ (via application).
\item In classical realizability, when the implicative algebra~$\A$ is
  induced by an abstract Krivine structure (AKS) whose set of
  proof-like terms is generated by finitely many instructions
  (via application).
  As a matter of fact, all the examples of AKSs presented by Krivine
  in~\cite{Kri09,Kri12,Kri16,Kri18} are of this form.
\end{enumerate}
In the particular (but frequent) case where the separator~$\S$ is
finitely generated, it is easy to see that~$\S$ is a principal filter
of~$\A$ as soon as~$\S$ is a filter, or equivalently: as soon as~$\S$
contains the \emph{non-deterministic choice operator}
$$\Fork^{\A}~:=~\bigimeet_{\mcent{a,b\in\A}}(a\to b\to a\imeet b)
~=~(\Lam{xy}{x})^{\A}\imeet(\Lam{xy}{y})^{\A}\,.$$
(Intuition: since $\Fork^{\A}a\,b\ile a$ and $\Fork^{\A}a\,b\ile b$
for all $a,b\in\A$, the truth value $\Fork^{\A}$ can be viewed as
a program taking two arguments and non-deterministically returning
any of them.)

The above discussion recalls us that in (intuitionistic or classical)
realizability, under the realistic assumption that the language of
realizers is generated by finitely many primitives (via application),
the simple fact of enriching the programming language with a
non-deterministic choice operator has actually a dramatic impact on
the corresponding realizability models, that turn out to be isomorphic
to forcing models.

\subsection{Assemblies and forcing}
Coming back to the category $\AsmA$ of assemblies on~$\A$, the
situation of forcing is captured by the following theorem:
\begin{theorem}
  For each implicative algebra~$\A$, the following assertions are
  equivalent:
  \begin{enumerate}
  \item[(1)] The category $\AsmA$ is balanced.
  \item[(2)] The category $\AsmA$ is an elementary topos
  \item[(3)] The category $\AsmA$ is a Grothendieck topos
  \item[(4)] The forgetful functor $\Gamma:\AsmA\to\Set$ is full.
  \item[(5)] The adjunction $\Gamma\dashv\Delta$ is an equivalence of
    categories between $\AsmA$ and $\Set$.
  \item[(6)] The tripos $\mathbf{P}:\Set^{\op}\to\mathbf{HA}$ induced
    by~$\A$ is (isomorphic to) a forcing tripos.
  \end{enumerate}
\end{theorem}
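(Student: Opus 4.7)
The plan is to establish the circle
$(6) \Rightarrow (4) \Leftrightarrow (5) \Rightarrow (3) \Rightarrow (2) \Rightarrow (1) \Rightarrow (6),$
using Theorem~\ref{th:CharacForTrip} as the only non-trivial bridge between the algebraic condition that $\S$ is a principal filter and the categorical conditions on $\AsmA$. The chain $(5) \Rightarrow (3) \Rightarrow (2) \Rightarrow (1)$ is immediate: any equivalence $\AsmA \simeq \Set$ upgrades $\AsmA$ to a Grothendieck topos, hence to an elementary topos, hence to a balanced category. The equivalence $(4) \Leftrightarrow (5)$ is a standard fact about adjunctions: $\Gamma$ is always faithful by the very definition of morphisms in $\AsmA$ (Section~\ref{s:AsmA}) and $\Delta$ is always fully faithful with counit the identity (Proposition~\ref{p:AdjGammaDelta}), so $\Gamma \dashv \Delta$ is an equivalence precisely when $\Gamma$ is also full.

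The substantive implication is $(1) \Rightarrow (6)$. The idea is to exhibit a canonical mono--epi whose invertibility pins $\S$ down as a principal filter. Consider the assembly $X \assign (\S, \id_{\S})$, with carrier the set $\S$ and existence predicate the identity, together with the discrete assembly $\Delta\S$. The identity set-function $\S \to \S$ tracks as a morphism $\varphi : X \to \Delta\S$ via the tracker $\top$ (since $s \to \top = \top$ for all $s \in \A$, by axiom~(2) of implicative structures applied to $B = \varnothing$), and $\varphi$ is bijective on carriers, hence both mono and epi by Proposition~\ref{p:MonoEpi}. If $\AsmA$ is balanced, $\varphi$ is an iso and its inverse identity map must be tracked as a morphism $\Delta\S \to X$:
$$\bigimeet_{s \in \S} (\top \to s) ~=~ \top \to \bigimeet_{s \in \S} s ~\in~ \S,$$
using axiom~(2) for the first equality. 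Modus ponens with $\top \in \S$ then yields $\bigimeet_{s \in \S} s \in \S$, so $\S$ admits a minimum and is a principal filter, whence (6) by Theorem~\ref{th:CharacForTrip}.

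For $(6) \Rightarrow (4)$ the argument is routine: by Theorem~\ref{th:CharacForTrip}, $\S$ has a least element $s_0$, so for any assemblies $X, Y$ and any set-function $f : \Car{X} \to \Car{Y}$ one has $\bigimeet_{x} \Ex{Y}(f(x)) \ige s_0 \in \S$. Combined with $\mathbf{K}^{\A} \ile b \to a \to b$ and monotonicity of application, this gives
$$\mathbf{K}^{\A} \Bigl( \bigimeet_{x \in \Car{X}} \Ex{Y}(f(x)) \Bigr) ~\ile~ \bigimeet_{x \in \Car{X}} \bigl( \Ex{X}(x) \to \Ex{Y}(f(x)) \bigr),$$
and the left-hand side lies in $\S$ by closure of separators under application, so $f$ is automatically tracked. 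Hence $\Gamma$ is full. The main obstacle is the observation at the heart of $(1) \Rightarrow (6)$: namely, that the tautological assembly $(\S, \id_{\S})$ is the right test object for extracting a minimum of $\S$ out of balancedness; everything else reduces to standard adjunction theory and Theorem~\ref{th:CharacForTrip}.
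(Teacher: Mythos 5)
Your proposal is correct and follows essentially the same route as the paper: the same cycle of implications, the same test assembly $(\S,\id_{\S})$ against $\Delta\S$ to extract $\bigimeet\S\in\S$ from balancedness, and the same use of the least element $s_0$ of $\S$ to track every set-function for $(6)\Rightarrow(4)$. The only cosmetic differences are your explicit $\mathbf{K}^{\A}$-tracker where the paper simply uses $\top\to s_0$, and phrasing $(4)\Leftrightarrow(5)$ as standard adjunction theory rather than spelling out the unit.
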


\begin{proof}
  $(1)\limp(6)$.\quad
  Consider the assembly $S$ defined by $\Car{S}:=\S$ and
  $\Ex{S}(s):=s$ for all $s\in\S$, as well as the morphism
  $f:S\to\Delta\S$ defined as the set-theoretic identity map
  tracked by $\top\in\S$.
  From Prop.~\ref{p:MonoEpi}, it is clear that~$f$ is both monic and
  epic in~$\AsmA$.
  If we now assume that $\AsmA$ is balanced (1), then~$f$ is an
  isomorphism too, which means that the inverse morphism
  $f^{-1}:\Delta\S\to S$ fulfills the tracking condition
  $$\bigimeet_{\mcent{s\in\S}}\bigl(\Ex{\Delta\S}(s)\to\Ex{S}(s)\bigr)
  ~{}~=~{}~\bigimeet_{\mcent{s\in\S}}(\top\to s)
  ~{}~=~{}~\top\to\bigimeet\!\!\S~{}~\in~{}~\S\,.$$
  Hence $\bigl(\bigimeet\!\!\S\bigr)\in\S$ (by modus ponens), which
  means that~$\S$ is a principal filter, or equivalently, that the
  tripos induced by~$\A$ is isomorphic to a forcing tripos~(6).
  \smallbreak\noindent
  $(6)\limp(4)$.\quad If the tripos induced by~$\A$ is isomorphic to a
  forcing tripos (6), then the separator $\S\subseteq\A$ is a
  principal filter of~$\A$, generated by its smallest element
  $s_0:=\min(\S)$.
  Given any two assemblies~$X$ and~$Y$, we observe that each
  set-theoretic map $f:\Car{X}\to\Car{Y}$ is tracked by
  the truth value $(\top\to s_0)\in\S$, hence
  $\Hom_{\AsmA}(X,Y)=\Hom_{\Set}(\Car{X},\Car{Y})
  =\Hom_{\Set}(\Gamma{X},\Gamma{Y})$.
  This proves that the forgetful functor $\Gamma:\AsmA\to\Set$
  is full~(4).
  \smallbreak\noindent
  $(4)\limp(5)$.\quad
  Regarding the adjunction $\Gamma\dashv\Delta$, we observe that:
  \begin{itemize}
  \item The counit $\epsilon_X:\Gamma(\Delta{X})\to X$ ($X\in\Set$) is
    the identity map (in~$\Set$), that is a always a natural
    isomorphism, independently from the implicative algebra~$\A$.
  \item The unit $\eta_X:X\to\Delta(\Gamma{X})$ ($X\in\AsmA$) is the
    set-theoretic identity map tracked by the truth value $\top\in\S$
    (as a morphism in~$\AsmA$).
    It is always a natural transformation, but in general not a
    natural isomorphism.
  \end{itemize}
  However, if we now assume that the forgetful functor
  $\Gamma:\AsmA\to\Set$ is full~(4), we have that
  $\Hom_{\Asm}(\Delta(\Gamma{X}),X)
  =\Hom_{\Set}(\Gamma(\Delta(\Gamma{X})),\Gamma{X})
  =\Hom_{\Set}(\Car{X},\Car{X})$ (by fullness),
  which proves that the map
  $\zeta_X:=\id_{\Car{X}}\in\Hom_{\Set}(\Car{X},\Car{X})$
  is a morphism of type $\Delta(\Gamma{X})\to X$ in~$\AsmA$.
  By construction we have $\zeta_X\circ\eta_X=\id_X$ and
  $\eta_X\circ\zeta_X=\id_{\Gamma(\Delta{X})}$.
  Therefore the unit $\eta_X:X\to\Delta(\Gamma{X})$ ($X\in\AsmA$) is
  a natural isomorphism too, which proves that the adjunction
  $\Gamma\dashv\Delta$ is an equivalence of categories between~$\AsmA$
  and~$\Set$~(5).
  \smallbreak\noindent
  The remaining implications $(5)\limp(3)\limp(2)\limp(1)$
  are obvious.
\end{proof}

\bibliographystyle{plain}
\bibliography{paper}

\end{document}